\documentclass{article}
\usepackage{graphicx,xcolor,textpos}
\usepackage{helvet}
\usepackage{amssymb,latexsym,amsmath,epsfig,amsthm,hyperref,tipa,comment}
\usepackage{enumitem}
\usepackage{pgfplots}
\newtheorem{main}{Theorem}

\newtheorem{vnincrease}{Proposition}
\newtheorem{cpminuspalmostincreases}[vnincrease]{Proposition}
\newtheorem{differentiable}[vnincrease]{Proposition}
\newtheorem{cpineqsecondpair}[vnincrease]{Proposition}

\newtheorem{boundsforphi}{Lemma}

\newtheorem{cdfmonotone}[boundsforphi]{Lemma}
\newtheorem{vnmonotone}[boundsforphi]{Lemma}
\newtheorem{boundsforall}[boundsforphi]{Lemma}
\newtheorem{expophi}[boundsforphi]{Lemma}
\newtheorem{expok}[boundsforphi]{Lemma}

\newtheorem{expo}[boundsforphi]{Lemma}
\newtheorem{arithgeo}[boundsforphi]{Lemma}

\newtheorem{kansopnstijgt}[boundsforphi]{Lemma}
\newtheorem{intervalhopping}[boundsforphi]{Lemma}
\newtheorem{recursived}[boundsforphi]{Lemma}

\newtheorem{nparbitrarilylarge}[boundsforphi]{Lemma}
\newtheorem{vnnotincrease}[boundsforphi]{Lemma}
\newtheorem{cpsmallerthanhalf}[boundsforphi]{Lemma}

\newtheorem{dipvsdiq}[boundsforphi]{Lemma}
\newtheorem{dipvsdiqtwo}[boundsforphi]{Lemma}

\newtheorem{cphi}{Corollary}
\newtheorem{cpsmallerthanpminusone}[cphi]{Corollary}
\newtheorem{continuouscp}[cphi]{Corollary}

\newtheorem{nplower}[cphi]{Corollary}

\setlength\parindent{0pt}

\hyphenpenalty=10000
\begin{document}

\vspace*{-2cm}

\Large
 \begin{center}
On maximizing the number of heads when you need to set aside at least one coin every round \\ 

\hspace{10pt}

\large
Wouter van Doorn \\

\hspace{10pt}

\end{center}

\hspace{10pt}

\normalsize

\vspace{-10pt}

\centerline{\bf Abstract}
You play the following game: you start out with $n$ coins that all have probability $p$ to land heads. You toss all of them and you then need to set aside at least one of them, which will not be tossed again. Now you repeat the process with the remaining coins. This continues (for at most $n$ rounds) until all coins have been set aside. Your goal is to maximize the total number of heads you end up with. In this paper we will prove that there exists a constant $p_0 \approx 0.5495021777642$ such that, if $p_0 < p \le 1$ and the number of remaining coins is large enough, then it is optimal to set aside exactly one coin every round, unless all coins landed heads. When $\frac{1}{2} \le p \le p_0$, it is optimal to set aside exactly one coin every round, unless at most one coin came up tails. Let $v_{n,p}$ be the expected number of heads you obtain when using the optimal strategy. We will show that $v_{n,p}$ is larger than or equal to $v_{n-1,p} + 1$ for all $n \ge 3$ and all $p \ge \frac{1}{2}$. When $p < \frac{1}{2}$ there are infinitely many $n$ for which this inequality does not hold. Finally, we will see that for every $p$ with $0 < p \le 1$ the sequence $n - v_{n,p}$ is convergent.

\section{Introduction}
In the board game \textit{To Court The King}, players roll multiple dice and have to set aside at least one of them after every roll, before they re-roll the remaining ones. This board game inspired Joachim Breitner in $2015$ to wonder what the optimal strategy is and whether it is ever optimal te re-roll a $6$, assuming that the goal is to maximize the expected value of the sum of all dice. He then asked these questions online \mbox{\cite{jb}} and, over a year later, user joriki found a surprising result: if you start out with $200$ dice and you get exactly two $6$s, then the optimal decision is to set aside only one of them, and re-roll everything else! \\

This did answer one of the questions, but it remained unclear whether it was possible to easily phrase the optimal strategy in general. And this paper will unfortunately not shed a lot of light on this problem either. \\

What we will do however, is look at a slightly easier version of this game, where we work with coins instead of dice, and where the goal is to maximize the number of heads you end up with. With this version, we will show what the optimal strategy is, as long as the probability $p$ of heads is at least $\frac{1}{2}$. When $p$ is smaller than $\frac{1}{2}$ however (which is essentially the case when we play with dice) everything is a lot less clear. We will therefore mainly focus on the case $p \ge \frac{1}{2}$. \\

\section{A recursive formula}
Let $v_{n,p}$ be the expected number of heads you obtain with optimal play, if you start out with $n$ coins that all have probability $p$ of landing heads. Clearly, $v_{n,p}$ is an increasing function of $n$, and it is never worth it to set aside more coins that landed tails than necessary. Equally clearly, when $n = 0$ or $n = 1$, there is very little to do and $v_{0,p} = 0$ and $v_{1,p} = p$. \\

For $n = 2$ it only gets slightly more interesting; if we obtain two heads directly, we stop and get a payoff of $2$. If we obtain two tails, we are forced to set one of them aside and can expect a payoff of $p$. And if we obtain one heads and one tails, we should set aside the heads and reflip the tails for an expected pay-off of $1+p$. Since the probabilities of these three cases are $p^2, (1-p)^2$ and $2p(1-p)$ respectively, this gives $v_{2,p} = 2 \cdot p^2 + p \cdot (1-p)^2 + (1+p) \cdot 2p(1-p) = -p^3 + 3p$. \\

When $n = 3$, the first non-trivial decision needs to be made; if exactly two of the three tossed coins show heads, should you set aside one or both of them? \\

In general, $v_{n,p} \le n$ regardless of the valus of $n$ and $p$. This implies that if we obtain $n$ heads (which has probability $p^n$), we should set aside all of them for a pay-off of $n$. If we get no heads (which has probability $(1-p)^n$), then we should set aside exactly one of the coins that landed tails for an expected payoff of $v_{n-1,p}$. And if we obtain $j$ heads with $1 \le j \le n-1$ and we decide to set aside $i$ of them (with $1 \le i \le j$), then the expected payoff equals $v_{n-i,p} + i$. So we should choose $i$ in a way such that $v_{n-i,p} + i$ gets maximized. Since the probability to get $j$ heads is equal to $\binom{n}{j}p^j(1-p)^{n-j}$, this means that we obtain the following recursive formula:

\begin{equation}
v_{n,p} = np^n + v_{n-1,p} (1-p)^n + \sum_{j=1}^{n-1} \binom{n}{j}p^j(1-p)^{n-j} \left(\max_{1 \le i \le j} (v_{n-i,p} + i)\right) \label{recform}
\end{equation}

This formula will be the base of everything that follows.

\newpage
\section{Stating the optimal strategy for $p \ge \frac{1}{2}$}
Taking $p = \frac{1}{2}$ as an example, we get $v_{0, \frac{1}{2}} = 0, v_{1, \frac{1}{2}} = \frac{1}{2}, v_{2, \frac{1}{2}} = \frac{11}{8}$, by the calculation in the previous section. With equation (\ref{recform}), one can further calculate $v_{3,\frac{1}{2}} = \frac{19}{8}$ and $v_{4,\frac{1}{2}} = \frac{433}{128}$. These values are perhaps more surprising than they initially look. We get $v_{4,\frac{1}{2}} = v_{3,\frac{1}{2}} + \frac{129}{128} > v_{3,\frac{1}{2}} + 1$. So if you had to decide between tossing $4$ (fair) coins, or getting a free heads and tossing $3$ coins instead, you should choose the former! And this turns out to be a general rule; for all $p \ge \frac{1}{2}$ we get $v_{n,p} \ge v_{n-1,p} + 1$ for all $n \ge 3$, with strict inequality unless $n = 3$ and $p = \frac{1}{2}$.   \\

Moreover, note that the optimal strategy depends on whether $v_{2,p} > v_{1,p} + 1$ or $v_{2,p} < v_{1,p} + 1$ holds. If $n = 3$ and we roll two heads, then with $v_{2,p} < v_{1,p} + 1$ we should set aside both heads (for a pay-off of $v_{1,p} + 2$), whereas if $v_{2,p} > v_{1,p} + 1$ we should only set aside one of the coins showing heads (for a pay-off of $v_{2,p} + 1$). We claim that, in a sense, this is the only non-trivial decision to make when $p \ge \frac{1}{2}$; the main theorem we will prove is that whether you should settle for $n-1$ heads or not, is essentially the only variable in an optimal strategy. But before we prove our main theorem, let us state it first.

\begin{main} \label{Main}
Assume that you have $n \ge 3$ remaining coins and, after you toss them all, $j$ of those turn up heads. Further assume that the probability of heads is $p$, with $\frac{1}{2} \le p < 1$. It is then always optimal to follow one of these two strategies:

\begin{enumerate}[label=\Alph*]
	\item Set aside exactly one coin, unless $j = n$, in which case you take all of them.
	\item Set aside exactly one coin, unless $j = n$ or $j = n-1$. In both of these cases you take all heads.
\end{enumerate}

Moreover, there are two constants $\phi = \frac{\sqrt{5} - 1}{2} \approx 0.618$ and $p_0 \approx 0.5495021777642$, such that there are only three possibilities that can occur for a given $p \in [\frac{1}{2}, 1)$:

\begin{enumerate}
	\item For $\phi \le p < 1$ it is optimal to follow strategy A for all $n \ge 3$. 
	\item For $p_0 < p < \phi$ there exists an absolute constant $s'(p_0) > 0$ and a positive integer $n(p) = \left \lfloor \frac{\log(p - p_0) + \log\left(s'(p_0)\right)}{\log(p_0)} + o(1) \right \rfloor$\footnote{Here, $o(1)$ is a quantity that converges to $0$ as $n(p)$ goes to infinity, which happens when $p$ converges to $p_0$ from above.} such that it is optimal to follow strategy A when $n > n(p)$, but one should switch to strategy B when the remaining number of coins becomes smaller than or equal to $n(p)$.
	\item For $\frac{1}{2} \le p \le p_0$ it is optimal to follow strategy B for all $n \ge 3$.
\end{enumerate}

\end{main}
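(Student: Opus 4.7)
The plan is to reduce the full recursion (\ref{recform}) to a one-dimensional dynamical system in the deficit $d_n := n - v_{n,p}$, and to extract the two critical probabilities $\phi$ and $p_0$ from a one-step calculation and a fixed-point analysis of this system. First I would establish the unit-gain inequality $v_{n,p} \ge v_{n-1,p} + 1$ for $n \ge 3$ and $p \ge 1/2$ stated informally after Section~3's computations; a routine induction on $n$ based on (\ref{recform}) should deliver it. Telescoping this inequality gives $v_{n-1,p} + 1 \ge v_{n-i,p} + i$ for every $2 \le i \le n - 2$, so setting aside any intermediate number of coins is dominated by $i = 1$. When $j \le n - 2$ heads appear this forces $i = 1$; when $j = n - 1$ the only live alternative to $i = 1$ is $i = n - 1$, with payoff $p + n - 1$; and when $j = n$ both strategies take all heads. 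Hence strategies A and B exhaust the candidates, with A beating B precisely when $d_{n-1} \le 1 - p$.

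Substituting strategy A into (\ref{recform}) collapses it to the clean one-step recursion $d_n = d_{n-1}(1 - p^n) + (1-p)^n$. A direct computation gives $d_2 = 2 - 3p + p^3$, and $d_2 \le 1 - p$ is equivalent to $p^3 - 2p + 1 \le 0$; factoring the cubic and using $\phi^2 + \phi = 1$ shows its only root in $(0,1)$ is $p = \phi$, which explains the appearance of the golden ratio. For case~1 ($p \ge \phi$), I would induct to show that the A-recursion preserves $d_n \le 1 - p$; the induction step reduces to $(1-p)^{n-1} \le p^n$, which at $n = 2$ is exactly the defining identity of $\phi$ and tightens for larger $n$, so strategy A is optimal for all $n \ge 3$. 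For $p < \phi$ one instead has $d_2 > 1 - p$, forcing strategy B at $n = 3$, so I would derive the analogous B-recursion
\[
d_n = d_{n-1}\bigl(1 - p^n - np^{n-1}(1-p)\bigr) + (1-p)^n + np^{n-1}(1-p)^2
\]
by substituting strategy B into (\ref{recform}), and then study its iterates $d_n^{(B)}$ starting from $d_2$.

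The key structural lemma to aim for is that $d_n^{(B)}$ is monotone in $n$ and converges to a continuous, strictly decreasing function $D(p)$, with the equation $D(p_0) = 1 - p_0$ characterizing $p_0$. For $p \le p_0$ one has $d_n^{(B)} \ge 1 - p$ for every $n$, so strategy B is optimal throughout (case~3). For $p_0 < p < \phi$ the sequence $d_n^{(B)}$ crosses the threshold $1 - p$ at a first index $n(p)$, above which one switches permanently to strategy A; one still has to check that once the switch has happened the A-recursion continues to preserve $d_n \le 1 - p$, which uses that $n(p)$ is already large enough to make $(1-p)^{n-1} \le p^n$ hold. Linearizing $d_n^{(B)} - D(p)$ near $p = p_0$ yields a tail of the form $d_n^{(B)} \approx D(p_0) + s'(p_0) p_0^n$ with $s'(p_0) > 0$, and solving this against $1 - p$ for the crossing produces the announced floor formula for $n(p)$. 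The hardest part will be this last paragraph: establishing uniform monotonicity and continuity of the B-iterates, strict monotonicity of the limit $D(p)$, and carrying out the linearization precisely enough to pull out the floor expression; in the background one also has to rule out any B-A-B oscillation, which would spoil the clean three-case picture.
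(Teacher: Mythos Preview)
Your overall architecture matches the paper's closely: reduce (\ref{recform}) via the unit-gain inequality $v_{n,p}\ge v_{n-1,p}+1$ to a single A/B decision at $j=n-1$, then analyse the resulting one-step recursions in the deficit $d_n=n-v_{n,p}$ (the paper uses the equivalent shifted quantity $s_n(p)=(1-p)-d_n$). Your derivation of $\phi$ from $d_2\le 1-p$, the A-recursion $d_n=d_{n-1}(1-p^n)+(1-p)^n$, and the B-recursion are all correct and agree with the paper's equations (\ref{recformtwo}) and Lemma~\ref{recursived}.

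The genuine gap is your first step. The unit-gain inequality for $n\ge 3$ and $p\in[\tfrac12,\phi)$ is \emph{not} a routine induction; it is the single hardest ingredient of the whole theorem. A direct induction via the B-recursion asks for $d_{n-1}\bigl(p^n+np^{n-1}(1-p)\bigr)\ge np^{n-1}(1-p)^2+(1-p)^n$, and if you only feed in $d_{n-1}>1-p$ this reduces to $p^n\ge(1-p)^{n-1}$, which \emph{fails} for $p<\phi$ and small $n$ (at $p=\tfrac12$, $n=3$ one has exact equality $d_3=d_2$, so there is zero slack). The paper gets around this with an interval-hopping/bootstrapping argument (Lemma~\ref{intervalhopping}): it first proves the inequality on $[\phi,1)$ by a two-sided induction carrying the auxiliary lower bound $d_n>\bigl(\tfrac{1-p}{p}\bigr)^{n+1}$ (Lemma~\ref{boundsforphi}), uses that to control $s(q_1)$ numerically at a sequence of anchor points $q_1$ via Lemma~\ref{expo}, and then pushes the validity interval leftwards step by step through a finite, computer-verified list $\phi,0.57,0.55,\ldots,\tfrac12$. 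Without something of this kind your induction will not close near $p=\tfrac12$. Note also that the ``monotonicity of $d_n^{(B)}$'' you flag as hard in your final paragraph is, in the B-regime, exactly the unit-gain inequality you earlier called routine; these are the same obstacle.

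A second, smaller issue is your linearization for $n(p)$. The expression $d_n^{(B)}\approx D(p_0)+s'(p_0)\,p_0^{\,n}$ conflates two different limits: the decay in $n$ of the tail $d_n-D(p)$ at fixed $p$, and the derivative in $p$ of the limit $D(p)$ at $p_0$. The symbol $s'(p_0)$ in the theorem is the latter, and there is no reason the $n$-tail coefficient should equal it (indeed, with your signs the crossing equation $d_n^{(B)}=1-p$ would force $p-p_0$ and $s'(p_0)p_0^{\,n}$ to have opposite signs). The paper keeps these separate: Proposition~\ref{cpineqsecondpair} shows $(1-\epsilon)p^{\,n(p)+1}<s(p)<(1+\epsilon)p^{\,n(p)}$ by summing the A-recursion tail from $n(p)$ onward, while Proposition~\ref{differentiable} independently proves $s$ is differentiable at $p_0$ with $s'(p_0)>0$; equating $s(p)\approx s'(p_0)(p-p_0)$ with $s(p)\approx p^{\,n(p)}$ then yields the floor formula. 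Your plan should decouple these two estimates in the same way.
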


\newpage
\section{Splitting up our Main Theorem into four parts} \label{splitting}
We claim that the entirety of Theorem \ref{Main} follows from equation (\ref{recform}) and four propositions. In order to be able to state these propositions, define $s_n(p) := v_{n,p} - n + 1 - p$ and $s(p) := \displaystyle \lim_{n \rightarrow \infty} s_n(p)$. A priori this limit might not exist of course, but we will see that it does. Finally, define $n(p)$ to be the smallest integer $n \ge 2$ for which $s_n(p) \ge 0$, if it exists.

\begin{vnincrease} \label{vnincrease}
For all $p$ with $\frac{1}{2} \le p < 1$ and all $n \ge 3$ we have $v_{n,p} \ge v_{n-1,p} + 1$, with strict inequality unless $p = \frac{1}{2}$ and $n = 3$. If $p$ is larger than or equal to $\phi$, then the inequality $v_{n,p} \ge v_{n-1,p} + 1$ already holds for $n = 2$, with strict inequality unless $p = \phi$. In particular, if $p \ge \frac{1}{2}$, then $s_n(p)$ is for $n \ge 2$ an increasing function of $n$. 
\end{vnincrease}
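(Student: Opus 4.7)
The plan is to prove the proposition by strong induction on $n$, using the observation that $s_n(p) - s_{n-1}(p) = v_{n,p} - v_{n-1,p} - 1$, which makes the final sentence of the statement equivalent to the displayed inequalities. The base case $n = 2$ comes from the direct computation $v_{2,p} - v_{1,p} - 1 = -p^3 + 2p - 1 = (1-p)(p^2 + p - 1)$, non-negative precisely when $p \ge \phi$, with equality at $p = \phi$. For $n = 3$, I would expand equation (\ref{recform}) and observe that the only non-trivial maximum occurs at $j = 2$: compare $v_{2,p}+1$ against $v_{1,p}+2 = p+2$, whose difference is exactly the $n = 2$ expression $(1-p)(p^2+p-1)$. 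Splitting into the regimes $p \in [\phi, 1)$ (where $v_{2,p}+1$ wins) and $p \in [\tfrac{1}{2}, \phi]$ (where $p+2$ wins) and performing a routine polynomial factorization should give $v_{3,p} - v_{2,p} - 1 = (1-p)^2(p^4 + 2p^3 + p - 1)$ and $(2p-1)(1-p)^3(p+1)^2$ respectively, each visibly non-negative on its range, with the only equality at $p = \tfrac{1}{2}$ appearing in the second.

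For the inductive step at $n \ge 4$, the induction hypothesis gives $v_{k,p} - v_{k-1,p} \ge 1$ for $3 \le k \le n-1$, together with the same inequality at $k = 2$ when $p \ge \phi$. Setting $T(i) := v_{n-i,p} + i$, this forces $T(i) \ge T(i+1)$ whenever $n - i \ge 3$, and at $i = n-2$ as well if $p \ge \phi$. Hence the inner max $u_j := \max_{1 \le i \le j} T(i)$ in (\ref{recform}) collapses to $T(1) = v_{n-1,p}+1$ for every $j \in \{1,\dots,n-1\}$ when $p \ge \phi$, and for every $j \le n-2$ when $p < \phi$; only in the exceptional case $j = n-1$, $p < \phi$ does one get $u_{n-1} = \max(v_{n-1,p}+1,\, p+n-1) = v_{n-1,p} + 1 + \max(0, -s_{n-1}(p))$. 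Substituting into (\ref{recform}) and writing $b_k := k - v_{k,p}$ yields the master identity
\[
v_{n,p} - v_{n-1,p} - 1 \;=\; b_{n-1}\, p^n - (1-p)^n + \mathbf{1}_{\{p < \phi\}}\cdot n\, p^{n-1}(1-p)\cdot \max\bigl(0,\, -s_{n-1}(p)\bigr).
\]

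When $p \ge \phi$ the indicator vanishes and all that remains is to show $b_{n-1} \ge r^n$ with $r := (1-p)/p \in (0, 1]$. In this regime the master identity reduces to the linear recursion $b_n = b_{n-1}(1 - p^n) + (1-p)^n$, whose invariant $b_{n-1} \ge r^n \Rightarrow b_n \ge r^n \ge r^{n+1}$ is immediate; the base case $b_2 \ge r^3$ rearranges to $p^4 + 2p^3 + p - 1 \ge 0$, which has value $\phi^3$ at $p = \phi$ and strictly positive derivative on $[\phi, 1)$, hence holds strictly. Strictness then propagates, giving $v_{n,p} > v_{n-1,p} + 1$ for all $n \ge 3$ in this regime, which combined with the base case $n = 2$ settles the proposition for $p \ge \phi$.

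The main obstacle is the remaining range $\tfrac{1}{2} \le p < \phi$: here the base $b_2 \ge r^3$ fails, so the monotone-recursion argument collapses. The substitute is the boost term $n p^{n-1}(1-p)|s_{n-1}(p)|$ that appears whenever $s_{n-1}(p) < 0$, coming from switching to strategy B in the $j = n-1$ branch. The analysis then splits by the sign of $s_{n-1}(p)$: while $s_{n-1}(p) < 0$, the boost must compensate any shortfall in $b_{n-1} p^n - (1-p)^n$; once $s_{n-1}(p)$ turns non-negative (which may happen only past some threshold in $n$), the ``Case $p \ge \phi$'' style recursion takes over from that point onward. The hard step will be a quantitative lower bound on $|s_{n-1}(p)|$ while it is negative, ensuring the boost is large enough; I anticipate running a secondary induction that simultaneously tracks the monotonicity $s_n \ge s_{n-1}$ and a decay-type estimate on $|s_n(p)|$, anchored by the explicit value $s_2(p) = -p^3 + 2p - 1$ and saturating at $p = \tfrac{1}{2}$, $n = 3$ to produce the unique equality case of the proposition.
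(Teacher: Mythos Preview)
Your treatment of $p \ge \phi$ is essentially the paper's Lemma~\ref{boundsforphi}: it too runs an induction that simultaneously maintains $v_{n,p} \ge v_{n-1,p}+1$ and the upper bound $n - v_{n,p} > \bigl((1-p)/p\bigr)^{n+1}$, i.e.\ your $b_n > r^{n+1}$; the polynomial $p^4 + 2p^3 + p - 1$ and the recursion $b_n = b_{n-1}(1-p^n) + (1-p)^n$ appear there verbatim.

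The gap is the regime $\tfrac12 \le p < \phi$, which you leave as ``I anticipate running a secondary induction\ldots''. This is where the real difficulty lies, and the paper's method is quite different from your sketch. After handling $n=3$ and $n=4$ by explicit polynomial factorizations (you do $n=3$; a separate and somewhat delicate argument is also needed for $n=4$), the paper makes no attempt at a uniform analytic lower bound on $|s_{n-1}(p)|$. Instead it proves an interval-hopping lemma: if the proposition is known at some $q_1 \le \phi$ and a certain numerical inequality involving the limit $s(q_1)$ is satisfied, then the proposition holds on an entire sub-interval $[q_2,q_1)$; checking that inequality requires computing $v_{30,q_1}$ with a computer and applying an exponential-convergence estimate for $s_n \to s$. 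Eleven such intervals are then chained from $\phi$ down to $\tfrac12$. Your alternative is not obviously doomed --- at $p=\tfrac12$, for instance, the invariant $|s_{n-1}| \ge \tfrac{1}{2(n+1)}$ is exactly what your master identity needs and it does propagate --- but carrying it uniformly across $[\tfrac12,\phi)$ is delicate, particularly at the index where $s_{n-1}(p)$ first becomes non-negative (for $p>p_0$) and the boost term disappears, forcing you to re-initialize the $b_{n-1}\ge r^n$ recursion from that point with no a priori control on $b$. The paper's resort to computer-assisted interval-hopping is evidence that a clean analytic induction is at best nontrivial here, and your proposal does not yet supply one.
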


\begin{cpminuspalmostincreases} \label{cpincrease}
The sequence $s_n(p)$ converges uniformly to $s(p)$ in the interval $[\frac{1}{2}, 1]$. Moreover, there exists a constant $p_0 \approx 0.5495021777642$ with $s(p_0) = 0$ such that $s(p) < 0$ for $\frac{1}{2} \le p < p_0$ and $s(p) > 0$ for $p_0 < p < 1$. In particular, $n(p)$ exists for all $p > p_0$.
\end{cpminuspalmostincreases}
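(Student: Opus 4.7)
Plan.

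By Proposition \ref{vnincrease}, the sequence $s_n(p)=v_{n,p}-n+1-p$ is non-decreasing in $n$ on $[\tfrac12,1]$; together with the trivial bound $v_{n,p}\le n$, which gives $s_n(p)\le 1-p$, this yields a pointwise limit $s(p)\le 1-p$. To upgrade to uniform convergence I would bound the non-negative increments $s_{n+1}(p)-s_n(p)$ directly from (\ref{recform}). Expanding $v_{n+1,p}-v_{n,p}-1$ with either strategy A or B active at step $n+1$, and using the uniform envelope $-\tfrac18\le s_n(p)\le 1-p$ valid for $n\ge 2$ (which follows from Proposition~\ref{vnincrease} and the computation of $s_2$), yields
\[
0\ \le\ s_{n+1}(p)-s_n(p)\ \le\ C\bigl(p^{n+1}+(n+1)\,p^n(1-p)+(1-p)^{n+1}\bigr)
\]
for an absolute constant $C$. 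Summing over $n\ge N$ gives a tail bound uniform on $[\tfrac12,1-\delta]$ that decays geometrically in $N$; on the complementary strip $[1-\delta,1]$ the coarse estimate $0\le s(p)-s_N(p)\le 1-p\le\delta$ suffices. Coupling $\delta$ with $N$ delivers uniform convergence on $[\tfrac12,1]$, and hence continuity of $s$ as a uniform limit of continuous functions.

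Equipped with continuity, I would locate $p_0$ as follows. At $p=1$, $v_{n,1}=n$ gives $s(1)=0$. At $p=\tfrac12$, combining the explicit value $s_4(\tfrac12)=-\tfrac{15}{128}$ with the above tail estimate (tightened by observing that $s_n(\tfrac12)\le 0$, so the strategy-B increment at $p=\tfrac12$ is bounded by $(\tfrac12)^{n+1}(n+2)|s_n|$) yields a negative upper bound such as $s(\tfrac12)\le -\tfrac{1}{16}$. At the same time, the factorisation $s_2(p)=(1-p)(p^2+p-1)$ shows $s_2(p)>0$ for $p>\phi=\tfrac{\sqrt5-1}{2}$, and monotonicity in $n$ promotes this to $s(p)>0$ on $(\phi,1)$. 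The intermediate value theorem then produces a zero of $s$ in $(\tfrac12,\phi)$; take $p_0$ to be the smallest such zero. Continuity together with $s(\tfrac12)<0$ and the minimality of $p_0$ force $s(p)<0$ throughout $[\tfrac12,p_0)$.

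The main obstacle is proving $s(p)>0$ throughout the whole interval $(p_0,1)$ and not merely on $(\phi,1)$: because $s$ also vanishes at $p=1$, a sign-change argument at $p_0$ alone leaves open the possibility of additional zeros in the narrow window $(p_0,\phi]$. My plan for this window is to exploit that on each of the finitely many $p$-subintervals where the argmax in (\ref{recform}) stabilises, every $s_n(p)$ is a polynomial of controlled form; passing to the limit yields an explicit relation pinning down $s(p)$ on that subinterval. Analysing this relation on $(p_0,\phi]$, together with a case-check at the (finitely many) transition points between strategy regimes, should rule out further zeros of $s$ and simultaneously fix the numerical value $p_0\approx 0.5495021777642$.
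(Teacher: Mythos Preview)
Your plan for uniform convergence and continuity is essentially the paper's: bound the increments $s_{n+1}(p)-s_n(p)$ via the simplified recursion (equation~(\ref{recgeneral})), sum the geometric tail on $[\tfrac12,1-\delta]$, and use $0\le s(p)-s_N(p)\le 1-p\le\delta$ near $p=1$. Likewise, your argument that $s(p)>0$ on $(\phi,1)$ via $s_2(p)>0$, and the use of the intermediate value theorem to produce at least one zero in $(\tfrac12,\phi)$, match the paper.

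The genuine gap is your treatment of uniqueness, i.e.\ the claim that $s(p)>0$ on \emph{all} of $(p_0,\phi]$. Your proposed route---partitioning $(p_0,\phi]$ into finitely many subintervals on which the argmax in (\ref{recform}) ``stabilises'', obtaining an explicit limiting relation for $s(p)$ on each, and case-checking the transitions---does not work. On $[\tfrac12,\phi)$ Proposition~\ref{vnincrease} already tells you the only relevant branching is between strategies A and B, and which branch is active at step $n$ is governed by the sign of $s_{n-1}(p)$. The $p$-value at which this sign changes depends on $n$, and as $n\to\infty$ these transition points accumulate at $p_0$ (indeed $n(p)\to\infty$ as $p\downarrow p_0$). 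So there is no finite partition of $(p_0,\phi]$ on which the recursion is given by a single polynomial formula uniformly in $n$, no closed-form relation for $s(p)$ emerges in the limit, and the ``finitely many transition points'' you intend to check are in fact infinitely many.

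What the paper does instead is prove a quantitative one-sided Lipschitz bound: from the recursion for $s_n$ (Lemma~\ref{recursived}) one shows that whenever $\tfrac12\le q<p<\phi$ and $s_{n-1}(p)\le 0$, then $s_n(p)>s_n(q)+C(p-q)$ for an absolute constant $C>0$ (Lemma~\ref{dipvsdiq}). Since at any putative second zero both $s_n(p)$ and $s_n(p_0)$ approach $0$ from below, one can pick $n$ with both values in $(-C|p-p_0|,0)$, contradicting $|s_n(p)-s_n(p_0)|>C|p-p_0|$. This monotonicity-in-$p$ estimate is the missing idea in your plan; it replaces the unworkable ``finite case analysis'' by a uniform differential-type inequality valid as long as the B-branch is active.
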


\begin{cpineqsecondpair} \label{cpineqsecondpair}
For every $\epsilon > 0$ there exists a $\delta > 0$ such that for all $p \in (p_0, p_0 + \delta)$ we get $(1 - \epsilon) p^{n(p)+1} < s(p) < (1 + \epsilon) p^{n(p)}$.
\end{cpineqsecondpair}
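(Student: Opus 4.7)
The plan is to exploit the recursion for $s_n(p)$ under the optimal strategy, which transitions at $N := n(p)$: strategy A is optimal at every round $n > N$ (since $s_{n-1} \ge s_N \ge 0$ there), while strategy B is optimal at round $N$ (since $s_{N-1} < 0$). A direct computation from equation~(\ref{recform}) under strategy A yields
\[ s_n = s_{n-1}(1-p^n) + p^n(1-p) - (1-p)^n \qquad (n > N), \]
and telescoping from $n = N+1$ to infinity, together with $s_n \to s(p)$ from Proposition~\ref{cpincrease}, produces the master identity
\[ s(p) - s_N \;=\; p^{N+1} \;-\; \sum_{n = N+1}^{\infty} p^n\, s_{n-1} \;-\; \frac{(1-p)^{N+1}}{p}. \]

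For the lower bound, Proposition~\ref{vnincrease} gives $s_{n-1} \le s(p)$ for all $n \ge N+1$, so the inner sum is bounded above by $s(p)\cdot p^{N+1}/(1-p)$. Rearranging and using $s_N \ge 0$,
\[ s(p) \;\ge\; \frac{p^{N+1} - (1-p)^{N+1}/p}{1 + p^{N+1}/(1-p)}. \]
As $p \to p_0^+$, Proposition~\ref{cpincrease} forces $N \to \infty$; and since $p_0 > \tfrac{1}{2}$ keeps $(1-p)/p$ bounded below $1$ in a neighborhood of $p_0$, both $\bigl((1-p)/p\bigr)^{N+1}/p$ and $p^{N+1}/(1-p)$ decay exponentially to $0$. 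Hence the right-hand side equals $p^{N+1}(1-o(1))$, yielding the lower bound for $\delta$ sufficiently small.

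For the upper bound, $s_{n-1} \ge s_N$ gives $\sum_{n > N} p^n s_{n-1} \ge s_N p^{N+1}/(1-p)$, and the master identity becomes
\[ s(p) \;\le\; s_N\bigl(1 - p^{N+1}/(1-p)\bigr) + p^{N+1} - (1-p)^{N+1}/p. \]
To control $s_N$, I use the round-$N$ recursion under strategy B: since $s_{N-1} < 0$ and the bracketed factor below is a binomial probability (hence non-negative),
\[ s_N = s_{N-1}\bigl(1 - p^N - Np^{N-1}(1-p)\bigr) + p^N(1-p) - (1-p)^N \;\le\; p^N(1-p). \]
Plugging in, $s(p) \le p^N(1-p) + p^{N+1} + o(p^N) = p^N(1-o(1)) < (1+\epsilon)p^N$ for $p$ close enough to $p_0$.

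The main obstacle is not algebraic but the logical prerequisite that strategy A is genuinely optimal at every round $n > N$ and strategy B is optimal at round $N$. Computing the expected-value difference between A and B at round $n$ (from equation~(\ref{recform})) gives a term proportional to $s_{n-1}$, so the round-by-round preference is dictated by the sign of $s_{n-1}$; but one additionally needs the part of Theorem~\ref{Main} asserting that no strategy outside $\{\mathrm{A},\mathrm{B}\}$ ever does strictly better, a classification which must be in place before invoking Proposition~\ref{cpineqsecondpair}. Once that is granted, everything else is uniform bookkeeping of geometric-decay error terms, made straightforward by $p_0$ sitting safely above $\tfrac{1}{2}$.
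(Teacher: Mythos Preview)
Your argument is correct and in several respects cleaner than the paper's. Both proofs telescope the recursion of Lemma~\ref{recursived} from $N=n(p)$ upward, so the lower-bound arguments are close cousins: the paper bounds $k-1-v_{k-1,p}=1-p-s_{k-1}(p)$ below by first forcing $s(p)$ to be small via continuity (Corollary~\ref{cont}), whereas you simply use $s_{k-1}(p)\le s(p)$ and move the resulting $s(p)\cdot p^{N+1}/(1-p)$ term across, avoiding the auxiliary smallness condition altogether. The upper bounds diverge more substantially. The paper performs an index shift $v_{k-1,p}\mapsto v_{k,p}$ at the cost of an $(n+1)^2p^{2n}$ error and then telescopes; you instead sandwich $s_{k-1}\ge s_N$ in the master identity and bound $s_N$ directly via the strategy-B recursion at round $N$, obtaining $s_N\le p^N(1-p)$. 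In fact your route yields the strict inequality $s(p)<p^{N}$ without any $\epsilon$ slack once $N$ is large enough for $1-p^{N+1}/(1-p)\ge 0$, so your write-up could drop the ``$+o(p^N)$'' in the final display and state $s(p)\le p^N(1-p)+p^{N+1}-\tfrac{(1-p)^{N+1}}{p}<p^N$.

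Two small remarks on presentation. First, the fact $N=n(p)\to\infty$ as $p\to p_0^+$ is not literally part of Proposition~\ref{cpincrease}; in the paper it is recorded separately as Lemma~\ref{nplarge}, though the argument (continuity of $s_n$ at $p_0$ together with $s_n(p_0)<0$) is exactly the one you have in mind. Second, your closing worry about circularity is unnecessary: the reduction of equation~(\ref{recform}) to equation~(\ref{recgeneral}), and hence the recursion of Lemma~\ref{recursived} that you are using, follows from Proposition~\ref{vnincrease} alone. It is established in Section~\ref{splitting} before any of Propositions~\ref{cpincrease}--\ref{differentiable} are invoked, so your ``master identity'' is available without appealing to Theorem~\ref{Main}.
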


\begin{differentiable} \label{differentiable}
The function $s(p)$ is differentiable in the point $p = p_0$, and $s'(p_0) > 0$.
\end{differentiable}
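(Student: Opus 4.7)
My plan is to compare $s$ with the function $f(p) := \lim_{n\to\infty} s_n^B(p)$, where $s_n^B$ iterates the pure strategy B recursion
\[
s_n^B(p) = c_n(p)\, s_{n-1}^B(p) + g_n(p), \quad c_n(p) := 1 - p^n - np^{n-1}(1-p), \quad g_n(p) := p^n(1-p) - (1-p)^n,
\]
from $s_2(p) = -p^3 + 2p - 1$. On $[\tfrac{1}{2}, p_0]$ strategy B is optimal at every step, so $s = f$ there; and because this recursion and its formal derivatives converge uniformly in a neighborhood of $p_0$, the function $f$ is smooth on such a neighborhood. Hence $s$ is already left-differentiable at $p_0$ with left-derivative $f'(p_0)$, and the two remaining tasks are to transfer this differentiability to the right and to verify $f'(p_0) > 0$.

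For the right derivative I would use that for $p > p_0$ the optimal strategy is B for $n \leq n(p)$ and A for $n > n(p)$. Iterating the strategy A recursion $s_n^A = (1-p^n)\,s_{n-1} + g_n$ upward from $n = n(p)$ yields
\[
s(p) = \Phi_{n(p)}(p)\, s_{n(p)}(p) + \sum_{j > n(p)} g_j(p)\, \Phi_j(p),\qquad \Phi_j(p) := \prod_{k > j}(1-p^k),
\]
while iterating pure strategy B from the same starting value $s_{n(p)}(p)$ (pure B coincides with the optimal strategy up to size $n(p)$) produces the analogous expansion
\[
f(p) = \Psi_{n(p)}(p)\, s_{n(p)}(p) + \sum_{j > n(p)} g_j(p)\, \Psi_j(p),\qquad \Psi_j(p) := \prod_{k > j} c_k(p).
\]
Subtracting and using the elementary expansions $\Phi_j - \Psi_j = O(j p^j)$ and $|g_j| = O(p^j)$, together with the one-step estimate $0 \leq s_{n(p)}(p) \leq p^{n(p)}(1-p)$ (obtained by applying the strategy B recursion at step $n(p)$ with $s_{n(p)-1}(p) < 0 \leq s_{n(p)}(p)$), I would get $|s(p) - f(p)| = O\bigl(n(p)\, p^{2n(p)}\bigr)$.

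The conversion of this estimate into $o(p - p_0)$ is a bootstrap using Proposition \ref{cpineqsecondpair}. From $p^{n(p)+1} \leq s(p)/(1-\varepsilon)$ and the smoothness bound $|f(p)| \leq L(p - p_0)$ one derives an inequality of the form $A\bigl[p_0(1-\varepsilon) - C\,n(p)\,A\bigr] \leq C'(p - p_0)$ with $A := p^{n(p)}$, and since $n(p)\,p^{n(p)} \to 0$ as $p \to p_0^+$ (the exponential beats the linear factor), this forces $p^{n(p)} = O(p - p_0)$ and $n(p) = O\bigl(|\log(p - p_0)|\bigr)$. Substituting back, $n(p)\,p^{2n(p)} = O\bigl(n(p)(p-p_0)^2\bigr) = o(p - p_0)$. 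Hence $s(p) = f(p) + o(p - p_0)$ as $p \to p_0^+$, so $s$ is right-differentiable at $p_0$ with the same derivative $f'(p_0)$ as on the left, proving $s'(p_0)$ exists and equals $f'(p_0)$.

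The main obstacle is the final claim, $f'(p_0) > 0$. Differentiating the strategy B recursion and iterating it in telescoping form gives
\[
f'(p_0) = \Psi_2(p_0)\, s_2'(p_0) + \sum_{m \geq 3}\bigl[c_m'(p_0)\, s_{m-1}(p_0) + g_m'(p_0)\bigr]\, \Psi_m(p_0).
\]
Every factor can be pinned down: each $\Psi_m(p_0) > 0$ since $c_k(p_0) = 1 - p_0^{k-1}\bigl(p_0 + k(1-p_0)\bigr) > 0$ for all $k \geq 2$; $s_2'(p_0) = 2 - 3 p_0^2 > 0$ because $p_0 < \sqrt{2/3}$; $c_m'(p_0) = -m(m-1)p_0^{m-2}(1-p_0) < 0$ combines with $s_{m-1}(p_0) < 0$ (from $s(p_0) = 0$ and the monotonicity in Proposition \ref{vnincrease}) to give a positive product; and $g_m'(p_0) = m p_0^{m-1}(1-p_0) - p_0^m + m(1-p_0)^{m-1} > 0$ for every $m \geq 3$, since $m(1-p_0) > p_0$ already at $m = 2$. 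Every summand is thus strictly positive, which gives $f'(p_0) > 0$ and hence $s'(p_0) = f'(p_0) > 0$.
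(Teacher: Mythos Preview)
Your argument is correct and follows a genuinely different path from the paper's proof.

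The paper works directly with the difference quotient: it fixes a large $n$, uses that $s_n$ is the \emph{same} polynomial on a two-sided neighborhood of $p_0$ (since $n(p)>n$ there), and then shows via a three-way telescoping decomposition of $s(p)-s_n(p)$ (Lemma~\ref{recursived}) that $\bigl|(s(p)-s_n(p))-(s(p_0)-s_n(p_0))\bigr|\le 10^4 n^2\cdot 0.6^n\cdot|p-p_0|$. The tricky case $p>p_0$ is handled with the bound $p_0^{n(p)+1}<37(p-p_0)$ coming from Corollary~\ref{nplower} (itself a consequence of Lemma~\ref{dipvsdiqtwo}). Positivity of $s'(p_0)$ is then a one-line consequence of Lemma~\ref{dipvsdiq}, which already gave $s_n(p)-s_n(q)>C(p-q)$ with $C>0$.

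Your route instead introduces the smooth reference function $f$ (the pure-B limit), identifies $s$ with $f$ on $[\tfrac12,p_0]$, and for $p>p_0$ compares the A- and B-product expansions from level $n(p)$ to obtain $|s(p)-f(p)|=O\bigl(n(p)\,p^{2n(p)}\bigr)$. The conversion of this to $o(p-p_0)$ is a clean bootstrap: Proposition~\ref{cpineqsecondpair} together with the Lipschitz bound for $f$ forces $p^{n(p)}=O(p-p_0)$, whence $n(p)\,p^{2n(p)}=O\bigl(|\log(p-p_0)|\,(p-p_0)^2\bigr)=o(p-p_0)$. Your positivity argument is structurally different and more explicit: after differentiating the B-recursion you exhibit $f'(p_0)$ as a series each of whose terms is strictly positive, using $c_m'<0$, $s_{m-1}(p_0)<0$, and $g_m'(p_0)>0$.

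What each approach buys: the paper's method is shorter on the positivity side (Lemma~\ref{dipvsdiq} does all the work) and avoids any auxiliary function, at the cost of a somewhat opaque computation with three sums and several ad hoc constants. Your method is more transparent conceptually --- it isolates the ``smooth'' part $f$ and shows the strategy-switching correction is second order --- and your sign analysis of $f'(p_0)$ gives a more informative reason for strict positivity than the paper's inductive lower bound. Both rely on the same prior inputs (Propositions~\ref{vnincrease}--\ref{cpineqsecondpair} and Lemma~\ref{nplarge}), so neither is logically leaner.
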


In future sections we will tackle these propositions one by one. But first we will show that, if we combine them, we are able to deduce Theorem \ref{Main}.

\begin{proof}[Proof of Theorem \ref{Main}]
To see why the above four propositions together indeed imply Theorem \ref{Main}, first note that iterating the inequality $v_{i,p} \ge v_{i-1,p} + 1 $ for $3 \le i \le n-1$ gives, after a change of variables, $v_{n-1,p} + 1 \ge v_{n-i,p} + i$ for all $i$ with $1 \le i \le n-2$. Equation (\ref{recform}) therefore simplifies as follows:

\begin{equation}
\begin{split}
v_{n,p} &= np^n + v_{n-1,p}(1-p)^n + \sum_{j=1}^{n-1} \binom{n}{j}p^j(1-p)^{n-j} \left(\max_{1 \le i \le j} (v_{n-i,p} + i)\right)  \\
\omit \span = np^n + v_{n-1,p}(1-p)^n + np^{n-1}(1-p) \max (v_{n-1,p} + 1, v_{1,p} + n - 1) \\
&+ \sum_{j=1}^{n-2} \binom{n}{j}p^j(1-p)^{n-j} (v_{n-1,p} + 1) \\
\omit \span = np^n + v_{n-1,p}(1-p)^n + np^{n-1}(1-p) \max (v_{n-1,p} + 1, v_{1,p} + n - 1) \\
&+ (v_{n-1,p} + 1) \left(1 - (1-p)^n - np^{n-1}(1-p) - p^n\right) \\
\omit \span = v_{n-1,p} + 1 + p^n(n - 1 - v_{n-1,p}) + np^{n-1}(1-p) \max(0, n - 2 + p - v_{n-1,p}) - (1-p)^n \label{recgeneral}
\end{split}
\end{equation}

The only unknown in this formula is the value of $\max(0, n - 2 + p - v_{n-1,p})$. In other words, the only difference in optimal strategies for different values of $p$, comes from whether $v_{n-1,p}$ is larger than $n - 2 + p$ or not. This decides whether you should, in the case of $n-1$ heads, take one or all of them. \\

By Proposition \ref{vnincrease} we know that $s_n(p) = v_{n,p} - n + 1 - p$ is for $n \ge 2$ a monotonically increasing function of $n$. Since $v_{n,p} \le n$, we furthermore know that $s_n(p)$ is upper bounded by $1 - p$. As $s(p)$ is defined as a limit of an increasing and bounded sequence, it exists. The inequality $v_{n,p} \ge v_{n-1,p} + 1$ furthermore shows that, as soon as $v_{n-1,p} \ge n - 2 + p$ for some $n \ge 3$, we get $v_{k,p} > k - 1 + p$ for all $k$ larger than $n-1$ as well. We thereby obtain the three distinct possibilities mentioned in Theorem \ref{Main}; either $v_{n,p} \ge n - 1 + p$ for all $n \ge 2$ (and we get possibility $1$), or $v_{n,p} > n - 1 + p$ for large enough $n$ (possibility $2$), or $v_{n,p} < n - 1 + p$ for all $n \ge 2$ (possibility $3$). The first and second possibility arise when $s(p) > 0$, and the third possibility arises when $s(p) \le 0$. This also explains the reason we defined $s_n(p)$ and $s(p)$ this way. \\

The only thing left to do is to show $n(p) = \left \lfloor \frac{\log(p - p_0) + \log\left(s'(p_0)\right)}{\log(p_0)} + o(1) \right \rfloor$ for $p_0 < p < \phi$, and we claim that this estimate follows from Propositions \ref{cpineqsecondpair} and \ref{differentiable}. To prove this, let $\epsilon > 0$ be arbitrary and define $\epsilon_1 = \min\left(\frac{s'(p_0)}{2}, \frac{\epsilon s'(p_0)}{16}, \frac{\epsilon}{8}, \frac{1}{2}\right)$. \\

Now, first of all, let $\delta_1 > 0$ be such that for all $p \in (p_0, p_0 + \delta_1)$ we have the inequalities $(1 - \epsilon_1) p^{n(p)+1} < s(p) < (1 + \epsilon_1) p^{n(p)}$ from Proposition \ref{cpineqsecondpair}. \\

Secondly, since $s(p)$ is differentiable in $p_0$ by Proposition \ref{differentiable} and $s(p_0)$ is equal to $0$, we obtain by the definition of differentiability that there exists a $\delta_2 > 0$ such that for all $p \in (p_0, p_0 + \delta_2)$ we have the inequalities $(s'(p_0) - \epsilon_1)(p - p_0) < s(p) < (s'(p_0) + \epsilon_1)(p - p_0)$. \\

Finally, define $\delta = \min\left(\delta_1, \delta_2, \frac{1}{20}, \left(\frac{\epsilon}{32}\right)^2, \frac{\epsilon}{32\left|\log\left(s'(p_0)\right)\right| + 1}\right)$. \\

For all $p \in (p_0, p_0 + \delta) \subseteq (0.5, 0.6)$ we then have, by the above, the inequalities $(s'(p_0) - \epsilon_1)(p - p_0) < s(p) < (1 + \epsilon_1) p^{n(p)}$. By dividing by $(1 + \epsilon_1) $ and taking the logarithm on both sides, we get $n(p) < \frac{\log(p - p_0)}{\log(p)} + \frac{\log(s'(p_0) - \epsilon_1)}{\log(p)} - \frac{\log(1 + \epsilon_1)}{\log(p)}$. \\

In order to slightly rewrite this upper bound we are going to apply $\frac{-1}{\log(p_0)} < \frac{-1}{\log(p)} < 2$ and make use of the two well-known bounds (both valid for all $x, y > 0$) $\log(x + y) < \log(x) + \frac{y}{x}$ and $\log(x) < \sqrt{x}$.

\begin{align*}
n(p) &< \frac{\log(p - p_0)}{\log(p)} + \frac{\log(s'(p_0) - \epsilon_1)}{\log(p)} - \frac{\log(1 + \epsilon_1)}{\log(p)} \\
&< \frac{\log(p - p_0)}{\log(p)} + \frac{\log\big(s'(p_0)\big)}{\log(p)} + \frac{2\epsilon_1}{s'(p_0) - \epsilon_1} + 2\epsilon_1 \\
&= \frac{\log(p - p_0)}{\log(p_0)} + \frac{\big(\log(p_0) - \log(p)\big)\log(p - p_0)}{\log(p)\log(p_0)} + \frac{\log\big(s'(p_0)\big)}{\log(p_0)} \\
&\hspace{10pt}+ \frac{\big(\log(p_0) - \log(p)\big)\log\big(s'(p_0)\big)}{\log(p)\log(p_0)} + \frac{2\epsilon_1}{s'(p_0) - \epsilon_1} + 2\epsilon_1 \\
&< \frac{\log(p - p_0) + \log\big(s'(p_0)\big)}{\log(p_0)} + 8(p_0 - p)\log(p - p_0) \\
&\hspace{10pt}+ 8(p - p_0)\left|\log\big(s'(p_0)\big)\right| + \frac{4\epsilon_1}{s'(p_0)} + 2\epsilon_1 \\
&< \frac{\log(p - p_0) + \log\big(s'(p_0)\big)}{\log(p_0)} + 8\sqrt{\delta} + 8\delta\left|\log\big(s'(p_0)\big)\right| + \frac{1}{4}\epsilon + \frac{1}{4}\epsilon \\
&\le \frac{\log(p - p_0) + \log\big(s'(p_0)\big)}{\log(p_0)} + \epsilon
\end{align*}

On the other hand, in order to obtain a lower bound on $n(p)$ for all $p \in (p_0, p_0 + \delta) \subseteq (0.5, 0.6)$, we combine the inequalities $(1 - \epsilon_1) p^{n(p)+1} < s(p)$ and $s(p) < (s'(p_0) + \epsilon_1)(p - p_0)$, divide by $(1 - \epsilon_1)$, take the logarithm, and then make use of the bound $\log(1 - x) > -2x$ which holds for all $x < 0.79$.

\begin{align*}
n(p) &> \frac{\log(p - p_0)}{\log(p)} + \frac{\log(s'(p_0) + \epsilon_1)}{\log(p)} - \frac{\log(1 - \epsilon_1)}{\log(p)} - 1 \\
&> \frac{\log(p - p_0) + \log\big(s'(p_0)\big)}{\log(p_0)} - 1 - \frac{2\epsilon_1}{s'(p_0)} - 4\epsilon_1 \\
&> \frac{\log(p - p_0) + \log\big(s'(p_0)\big)}{\log(p_0)} - 1 - \epsilon
\end{align*}

Since $n(p) \in \mathbb{N}$ and there is for small enough $\epsilon$ often only a single unique integer in the interval $\left(\frac{\log(p - p_0) + \log(s'(p_0))}{\log(p_0)} - 1 - \epsilon, \frac{\log(p - p_0) + \log(s'(p_0))}{\log(p_0)} + \epsilon\right)$, we get the estimate $n(p) = \left \lfloor \frac{\log(p - p_0) + \log\left(s'(p_0)\right)}{\log(p_0)} + o(1) \right \rfloor$.
\end{proof}

All in all, Theorem \ref{Main} essentially boils down to the inequalities $v_{n,p} \ge v_{n-1,p} + 1$, $s(p) > 0$ and $(1 - \epsilon) p^{n(p)+1} < s(p) < (1 + \epsilon) p^{n(p)}$. Here, the first inequality holds for all $p \ge \frac{1}{2}$ and $n \ge 3$, the second inequality holds if, and only if, $p > p_0$, and the final inequalities hold for all $p > p_0$ sufficiently close to $p_0$. \\

To prove the first inequality, we will split up the interval $[\frac{1}{2}, 1)$ into various sub-intervals, starting with the case $p \ge \phi$. To prove the second inequality, we will see that $s(p)$ is a continuous function that is negative for some values of $p$ and positive for other values of $p$ and thereby deducing the existence of a $p_0$ such that $s(p_0)$ is exactly equal to $0$. We then show that this value is unique, before moving on to the aforementioned bounds on $s(p)$ and its differentiability.

\section{The inequality $v_{n,p} \ge v_{n-1,p} + 1$ for $\phi \le p < 1$}
In this section we will prove Proposition \ref{vnincrease} for the sub-interval $[\phi, 1)$.

\begin{boundsforphi} \label{boundsforphi}
Let $p$ be such that $\phi \le p < 1$. Then $v_{n-1,p} + 1 \le v_{n,p} < n - \frac{(1-p)^{n+1}}{p^{n+1}}$ for all $n \ge 2$, with strict inequality for the lower bound, unless $n = 2$ and $p = \phi$.
\end{boundsforphi}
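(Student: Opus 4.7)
The plan is to prove both bounds simultaneously by strong induction on $n \ge 2$. For the base case $n = 2$, I would directly compute $v_{2,p} = -p^3 + 3p$, whence $v_{2,p} - v_{1,p} - 1 = (1-p)(p^2+p-1)$. Since $\phi$ is the positive root of $p^2 + p - 1$, both factors are non-negative on $[\phi, 1)$ with the second vanishing iff $p = \phi$, which settles the lower bound. For the upper bound at $n = 2$ I would rewrite $2 - v_{2,p} = (1-p)^2(p+2)$, which reduces the desired inequality to $p^4 + 2p^3 + p - 1 > 0$; repeated use of $\phi^2 = 1 - \phi$ evaluates this polynomial to $2\phi - 1 > 0$ at $p = \phi$, and the derivative $4p^3 + 6p^2 + 1 > 0$ shows it is strictly positive on $[\phi, 1]$.

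For the inductive step I assume both inequalities hold for all $k \in \{2, \ldots, n-1\}$ with $n \ge 3$, and abbreviate $d_{n-1} = n-1-v_{n-1,p}$. To establish the strict lower bound at $n$, I would consider the simple strategy that sets aside one heads whenever any heads were rolled and otherwise sets aside a single tail. Splitting on the three disjoint outcomes (all $n$ heads, at least one but not all heads, no heads) gives an expected payoff of $v_{n-1,p} + 1 + p^n d_{n-1} - (1-p)^n$. The inductive upper bound at $n-1$ says $d_{n-1} > (1-p)^n/p^n$, so $p^n d_{n-1} > (1-p)^n$ and this strategy strictly exceeds $v_{n-1,p} + 1$. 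Since $v_{n,p}$ dominates the payoff of any explicit strategy, the strict lower bound at $n$ follows.

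With the lower bound now available for all $k \in \{2, \ldots, n\}$, iterating it downwards and invoking the base case yields $v_{n-1,p} \ge v_{2,p} + (n-3) \ge (1+p) + (n-3) = n - 2 + p$, which makes $\max(0, n-2+p-v_{n-1,p}) = 0$. The same chain of inequalities also justifies the derivation of the simplified recursion (\ref{recgeneral}), which therefore reduces to $n - v_{n,p} = d_{n-1}(1-p^n) + (1-p)^n$. Substituting the inductive upper bound $d_{n-1} > (1-p)^n / p^n$ gives $n - v_{n,p} > \frac{(1-p)^n}{p^n}(1-p^n) + (1-p)^n = \frac{(1-p)^n}{p^n}$, and finally $p \ge \phi > \frac{1}{2}$ forces $\frac{1-p}{p} < 1$, so $\frac{(1-p)^n}{p^n} > \frac{(1-p)^{n+1}}{p^{n+1}}$, which is the strict upper bound at $n$.

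The main subtlety I expect is the tight interplay between the two inequalities: the upper bound at $n-1$ is precisely what drives the lower bound at $n$ through strategy A, while the lower bound at $n$ is in turn required both to validate the simplified recursion (\ref{recgeneral}) and to kill its $\max$ term, after which the upper bound at $n$ falls out. Once this dependency is untangled the remaining work is routine, and the only ingredient specific to $p \ge \phi$ is the uniform bound $\frac{1-p}{p} \le \phi < 1$ that allows the slack of size $(1-p)^n/p^n$ to propagate intact from step $n$ to step $n+1$.
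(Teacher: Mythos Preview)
Your argument is correct and mirrors the paper's: the same base-case polynomial factorizations (both reducing the upper bound to $p^4+2p^3+p-1>0$), and the same inductive use of the upper bound at $n-1$ through the simplified recursion to obtain both bounds at $n$. One minor slip worth fixing: the strategy you describe in words (``set aside one heads whenever any heads were rolled'') has expected payoff $v_{n-1,p}+1-(1-p)^n$, whereas the formula $v_{n-1,p}+1+p^n d_{n-1}-(1-p)^n$ you actually use is the payoff of Strategy~A, which sets aside \emph{all} coins in the all-heads event; with that correction your explicit-strategy step is precisely equation~(\ref{recformtwo}) read as a lower bound, and the rest coincides with the paper's computation.
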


\begin{proof}
We will prove these inequalities via induction. First of all, as we noticed before, $v_{2,p} = -p^3 + 3p$. And proving that this is larger than or equal to $v_{1,p} + 1 = p + 1$ is equivalent with proving that their difference $v_{2,p} - v_{1,p} - 1 = -p^3 + 2p - 1 = (1-p)(p^2 + p - 1)$ is non-negative for $p \in [\phi, 1)$. Since $1-p > 0$ and $p^2 + p \ge 1$ for $p \ge \phi$, this is immediate. This establishes the base case for the lower bound. \\

As for a base case for the upper bound, we need to check that the inequality $v_{2,p} = -p^3 + 3p < 2 - \frac{(1-p)^{3}}{p^{3}}$ holds for $\phi \le p < 1$. Multiplying this out leaves us with $-p^6 + 3p^4 - 3p^3 + 3p^2 - 3p + 1 = -(p-1)^2(p^4 + 2p^3 + p - 1) < 0$. Since $-(p-1)^2 < 0$, it is now sufficient to show $p^4 + 2p^3 + p > 1$. This is not hard to see however, since $p^4 + 2p^3 + p > 2p^3 + p > p^2 + p \ge 1$ for $p \ge \phi$. We therefore have a base for the upper bound as well. \\

Assume now $n \ge 3$ and that for all $k$ with $2 \le k \le n-1$ we have $v_{k-1,p} + 1 \le v_{k,p} < k - \frac{(1-p)^{k+1}}{p^{k+1}}$. We will then prove that these inequalities (but with strict inequality for the lower bound) hold for $k = n$ as well. \\

Since $v_{2,p} \ge p+1$ for $p \ge \phi$, we see by induction that equation (\ref{recgeneral}) in this case further simplifies to the following:

\begin{equation}
v_{n,p} = v_{n-1,p} + 1 + p^n(n - 1 - v_{n-1,p}) - (1-p)^n \label{recformtwo}
\end{equation}

Now, on the one hand, applying $v_{n-1,p} < n - 1 - \frac{(1-p)^{n}}{p^{n}}$ to this entire equation (which we are allowed to do) gives us a proof for the upper bound.

\begin{align*}
v_{n,p} &= v_{n-1,p} + 1 + p^n(n - 1 - v_{n-1,p}) - (1-p)^n \\
&< n - \frac{(1-p)^{n}}{p^{n}} + p^n\left(n-1 - \left(n - 1 - \frac{(1-p)^{n}}{p^{n}}\right)\right) - (1-p)^n \\
&= n - \frac{(1-p)^{n}}{p^{n}} \\
&< n - \frac{(1-p)^{n+1}}{p^{n+1}}
\end{align*}

While, on the other hand, only applying $v_{n-1,p} < n - 1 - \frac{(1-p)^{n}}{p^{n}}$ to the second occurrence of $v_{n-1,p}$ in equation (\ref{recformtwo}) gives us a proof for the lower bound.

\begin{align*}
v_{n,p} &= v_{n-1,p} + 1 + p^n(n - 1 - v_{n-1,p}) - (1-p)^n \\
&> v_{n-1,p} + 1 + p^n\left(n-1 - \left(n - 1 - \frac{(1-p)^{n}}{p^{n}}\right)\right) - (1-p)^n \\
&= v_{n-1,p} + 1 \qedhere
\end{align*}
\end{proof}

As we have mentioned before, the inequality $v_{n,p} \ge v_{n-1,p} +1$ (which we have now proven for $p \ge \phi$) implies the existence of a constant $s(p) \le 1 - p$ such that $s_n(p) = v_{n,p} - n + 1 - p$ converges to $s(p)$. Since $s(p) \le 1 - p$, it is natural to ask whether equality ever occurs for $p < 1$. We will show that this is not the case; $s(p)$ is strictly smaller than $1 - p$ for all $p$ with $\phi \le p < 1$, and we will later extend this inequality to all $p < 1$.

\begin{expophi} \label{expophi}
For all $p$ with $\phi \le p < 1$ we have $s(p) < 1 - p - (1-p)^{\frac{\log(1-p)}{\log(p)} + 2}$.
\end{expophi}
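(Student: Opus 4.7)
The plan is to reformulate everything in terms of $u_n := n - v_{n,p}$; equation (\ref{recformtwo}) then becomes the clean linear recursion
\begin{equation*}
u_n = (1-p^n)\, u_{n-1} + (1-p)^n, \qquad u_0 = 0.
\end{equation*}
By Lemma \ref{boundsforphi} we have $v_n \geq v_{n-1}+1$ for $p \geq \phi$, so $u_n$ is non-increasing and converges to $u := 1-p-s(p)$ with $u_n \geq u$ for every $n$. Writing $a := \log(1-p)/\log(p)$, the identity $p^a = 1-p$ holds, and $a \geq 2$ since $p \geq \phi$. The conclusion to prove is then equivalent to $u > (1-p)^{a+2}$.

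The central step is to telescope the recursion from some index $N$ onwards. Using $u_n - u_{n-1} = (1-p)^n - p^n u_{n-1}$ and letting $n \to \infty$,
\begin{equation*}
u - u_N \;=\; \frac{(1-p)^{N+1}}{p} \;-\; \sum_{n=N+1}^{\infty} p^n\, u_{n-1}.
\end{equation*}
Since $u_n$ is non-increasing, $u_{n-1} \leq u_N$ for every $n \geq N+1$, so the sum is at most $u_N\, p^{N+1}/(1-p)$, yielding
\begin{equation*}
u \;\geq\; u_N\!\left(1 - \frac{p^{N+1}}{1-p}\right) + \frac{(1-p)^{N+1}}{p}.
\end{equation*}
Now I choose $N = \lceil a \rceil$, so that $N+1 > a$ and $p^{N+1} \leq p(1-p)$, making the coefficient of $u_N$ strictly positive. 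Substituting the (trivial) inequality $u_N \geq u$ on the right and gathering the $u$-terms on the left yields
\begin{equation*}
u \;\geq\; \left(\frac{1-p}{p}\right)^{\lceil a \rceil + 2}.
\end{equation*}

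It remains to check that $\bigl((1-p)/p\bigr)^{\lceil a \rceil + 2} > (1-p)^{a+2}$. Using $1-p = p^a$, this is equivalent to $a(\lceil a \rceil - a) < \lceil a \rceil + 2$; writing $b = \lceil a \rceil - a \in [0, 1)$ and recalling $a \geq 2$, the inequality $ab < a + b + 2$ is immediate from $ab < a \leq a + b + 2$. Combining everything gives $u > (1-p)^{a+2}$, i.e., $s(p) < 1 - p - (1-p)^{a+2}$, as claimed. The main subtlety is recognising that the apparently circular substitution $u_N \geq u$ in fact produces a genuine positive lower bound on $u$, precisely because the choice $N = \lceil a \rceil$ forces the multiplier $1 - p^{N+1}/(1-p)$ to lie strictly between $0$ and $1$, which allows one to solve for $u$.
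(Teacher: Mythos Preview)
Your proof is correct and follows essentially the same route as the paper: rewrite the recursion in terms of $u_n = n - v_{n,p}$, telescope from $N = \lceil a\rceil$ (the paper's $N_0$), and use the monotonicity $u_{n-1}\le u_N$ together with $p^{N}\le 1-p$ to control the sum. The only real difference is in how $u_N$ is bounded below at the end: the paper uses the direct bound $u_N \ge (1-p)^N$ (an immediate consequence of the recursion) to obtain $u > (1-p)^{N+1}\ge (1-p)^{a+2}$, whereas you substitute $u_N \ge u$ and solve for $u$, which yields the slightly sharper $u \ge \big((1-p)/p\big)^{N+2}$ before comparing with $(1-p)^{a+2}$. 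Both finishes work; yours is a nice self-improving twist, while the paper's is a bit more direct.
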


\begin{proof}
Choose $N_0 = \left \lceil \frac{\log(1-p)}{\log(p)} \right \rceil$ so that $p^{N_0} \le 1 - p$ and define $\delta = N_0 - v_{N_0,p} \ge (1-p)^{N_0}$. Since $v_{k,p} \ge v_{k-1,p} + 1$ for all $k \ge N_0$, note that $k-1-v_{k-1,p} \le \delta$ for all $k > N_0$. Now we can bound the value of $s(p)$ by applying equation (\ref{recformtwo}).

\begin{align*}
s(p) &= 1 - p + \lim_{N \rightarrow \infty} \big[v_{N,p} - N\big] \\
&= 1 - p + \lim_{N \rightarrow \infty} \big[v_{N,p} - v_{N_0,p} - N + v_{N_0,p}\big] \\
&= 1 - p - \delta + \lim_{N \rightarrow \infty} \big[(v_{N,p} - v_{N_0,p}) - (N - N_0)\big] \\
&= 1 - p - \delta + \lim_{N \rightarrow \infty} \left[\sum_{k = N_0+1}^N v_k - v_{k-1} - 1\right] \\
&= 1 - p - \delta + \sum_{k = N_0+1}^{\infty} (k-1-v_{k-1})p^k - (1-p)^k \\
&< 1 - p - \delta + \sum_{k = N_0+1}^{\infty} \delta p^k \\
&= 1 - p  - \delta \left(1 - \frac{p^{N_0 + 1}}{1-p} \right) \\
&\le 1 - p - \delta (1 - p) \\
&\le 1 - p - (1-p)^{N_0+1} \\
&\le 1 - p - (1-p)^{\frac{\log(1-p)}{\log(p)} + 2} \qedhere
\end{align*}
\end{proof}

\section{The inequality $v_{n,p} \ge v_{n-1,p} + 1$ for $\frac{1}{2} \le p \le \phi$}
The goal of this section is to extend the lower bound from Lemma \ref{boundsforphi} to all $p \ge \frac{1}{2}$.

\begin{boundsforall} \label{boundsforall}
Let $p$ be such that $\frac{1}{2} \le p \le \phi$. Then $v_{n,p} \ge v_{n-1,p} + 1$ for all $n \ge 3$, with strict inequality, unless $n = 3$ and $p = \frac{1}{2}$.
\end{boundsforall}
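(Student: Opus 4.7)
I argue by strong induction on $n \ge 3$, paralleling Lemma~\ref{boundsforphi} but adapted to the regime $p < \phi$, where $d_2 := 2 - v_{2,p} = (1-p)^2(p+2)$ exceeds $q := 1-p$, so Case~B of the recursion is active initially. For the base case $n=3$, use that $p \le \phi$ forces $v_{2,p} \le v_{1,p}+1$, so on two heads out of three one should keep both; substituting $v_{1,p}=p$ and $v_{2,p}=-p^3+3p$ into equation~(\ref{recgeneral}) and simplifying produces the factorization
\[
v_{3,p} - v_{2,p} - 1 \;=\; (1-p)^3 (2p-1)(p+1)^2,
\]
which is non-negative on $[\tfrac{1}{2},1)$ and vanishes only at $p=\tfrac{1}{2}$, matching exactly the stated equality case.

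\textbf{Inductive step $n \ge 4$.} Assume the result for $3 \le k \le n-1$; then $d_k := k - v_{k,p}$ is non-increasing on $\{2, \ldots, n-1\}$. By equation~(\ref{recgeneral}), the target $v_{n,p} \ge v_{n-1,p}+1$ becomes
\[
p^n d_{n-1} + n p^{n-1} q \max\bigl(0,\, d_{n-1} - q\bigr) \;\ge\; q^n,
\]
which splits into Case~A ($d_{n-1} \le q$), where the goal is $d_{n-1} \ge (q/p)^n$, and Case~B ($d_{n-1} > q$), where with $R_k := \frac{k p^{k-1} q^2 + q^k}{p^k + k p^{k-1} q}$ the goal is $d_{n-1} \ge R_n$. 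The driving mechanism is a convex-combination rewrite of the recursion: in Case~B at level $k$, $d_k = A_k\,d_{k-1} + (1-A_k)\,R_k$ with $A_k \in (0,1)$, so $d_{k-1} \ge R_k$ yields $d_k \ge R_k$; in Case~A at level $k$, $d_k = (1-p^k)\,d_{k-1} + p^k (q/p)^k$ is a convex combination of $d_{k-1}$ and $(q/p)^k$, propagating the analogous (and, since $q/p \le 1$, geometrically decreasing) threshold. The base of this propagation is the Case~B inequality $d_2 \ge R_3$, which is equivalent to the base factorization above.

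\textbf{Main obstacle.} Unlike the $p \ge \phi$ regime, here one begins in Case~B and, for $p \in (p_0, \phi)$, there is a single transition to Case~A at some level $m$, after which Case~A persists. The main technical point is that $R_k$ is not globally monotone in $k$ on $[\tfrac{1}{2}, \phi]$; the resolution is to show, by a direct polynomial manipulation using $p \ge 1/2$, that $R_k$ is strictly decreasing in $k$ on the subregime $R_k > q$, so threshold propagation works there, while on the subregime $R_k \le q$ the inductively maintained Case~B condition $d_k > q$ already dominates $R_{k+1}$. The Case~B $\to$ Case~A transition at $m$ is then handled by a direct comparison of the inherited bound $d_{m-1} \ge R_{m-1}$ with the Case~A threshold $(q/p)^m$, which reduces to an explicit polynomial inequality in $p$ that holds precisely in the parameter range where Case~A is consistent at level $m$. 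After the transition, Case~A propagation is immediate since $(q/p)^k$ is non-increasing in $k$ for $p \ge 1/2$.
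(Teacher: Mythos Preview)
Your plan is correct and takes a genuinely different route from the paper. The paper handles $n=3$ and $n=4$ by explicit factorization, and then for $n\ge 5$ invokes an \emph{interval-hopping} lemma (Lemma~\ref{intervalhopping}): knowing the result at some $q_1$, together with an upper bound on $s(q_1)$ obtained from Lemma~\ref{expo} and a computer evaluation of $v_{30,q_1}$, extends the result to a sub-interval $[q_2,q_1)$; an explicit chain of eleven hops then covers all of $[\tfrac12,\phi]$. Your argument is instead purely analytic: you isolate the exact threshold $R_k$ (Case~B) or $(q/p)^k$ (Case~A) that $d_{k-1}$ must meet and push it forward through the convex-combination form of the recursion. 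The two claims you defer do check out by short computations. For the monotonicity of $R_k$: the condition $R_k>q$ is equivalent to $q^{k-1}>p^k$, and then $R_k>R_{k+1}$ reduces (after clearing denominators) to $p^2q^{k-2}+kq^{k-1}(p-q)>p^{k+1}$, where already $p^2q^{k-2}>p^{k+1}$ since $q^{k-1}>p^k$ and $q\le p$ force $q^{k-2}>p^{k-1}$. For the transition: $d_{m-1}\le q$ together with $d_{m-1}\ge R_{m-1}$ forces $R_{m-1}\le q$, hence $q^{m-2}\le p^{m-1}$ and so $q^{m-1}\le p^m$; the comparison $R_{m-1}\ge(q/p)^m$ is then the nonnegativity of $(m-1)p^{m-2}q^2(p^m-q^{m-1})+p^{m-1}q^{m-1}(p-q)$, both terms of which are nonnegative. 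What your approach buys is the complete elimination of the numerical step; what the paper's buys is a single uniform mechanism that is agnostic to the fine structure of the recursion, at the cost of computer assistance. One point you should make explicit: strict inequality for $n\ge 4$ follows in Case~B from the strict decrease $R_{n-1}>R_n$, and in Case~A from $(q/p)^{n-1}>(q/p)^n$ for $p>\tfrac12$; at $p=\tfrac12$ Case~A never arises, since each $d_k$ is a convex combination of $d_{k-1}>\tfrac12$ and $R_k>\tfrac12$.
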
 

\begin{proof}
To start off, let us deal with $n = 3$ and $n = 4$ separately. If $p \le \phi$, then $v_{2,p} = -p^3 + 3p \le p + 1 = v_{1,p} + 1$. Equation (\ref{recform}) then implies $v_{3,p} = -2p^6 + 3p^5 + 3p^4 - 7p^3 + 6p$, so that $v_{3,p} - v_{2,p} - 1 = -2p^6 + 3p^5 + 3p^4 - 6p^3 + 3p - 1 = (1-p)^3(p+1)^2(2p-1)$. This is immediately seen to be positive for $\frac{1}{2} < p \le \phi$ and equal to zero for $p = \frac{1}{2}$. \\

As for $n = 4$, since $\displaystyle \max_{1 \le i \le j} (v_{n-i,p} + i) = v_{3,p} + 1$ for $j \le 2$ and $\displaystyle \max_{1 \le i \le j} (v_{n-i,p} + i) \ge v_{1,p} + 3$ for $j = 3$, one can check that equation (\ref{recform}) implies the following:

\begin{equation*}
v_{4,p} \ge -6p^{10} + 17p^9 - 3p^8 - 33p^7 + 26p^6 + 17p^5 - 23p^4 + 5p^3 - 6p^2 + 10p
\end{equation*}

And we want to show that this is larger than $v_{3,p} + 1$. This is seen as follows:

\begin{align*}
v_{4,p} - v_{3,p} - 1 &\ge -6p^{10} + 17p^9 - 3p^8 - 33p^7 + 28p^6 + 14p^5 - 26p^4 + 12p^3 - 6p^2 + 4p - 1 \\
&= (1-p)^2\left(6p^7\left(\frac{5}{6} - p\right) + 13p^6 - 12p^5 - 9p^4 + 8p^3 - p^2 + 2p - 1\right) \\
&> (1-p)^2(13p^6 - 12p^5 - 9p^4 + 8p^3 - p^2 + 2p - 1) \\
&= (1-p)^3(-13p^5 - p^4 + 8p^3 + p - 1) \\
&> (1-p)^3(-16p^5 + 8p^3 + p - 1) \\
&= (1-p)^3(2p-1)\big(1 - (2p^2 + p)(4p^2 - 1)\big) \\
&\ge (1-p)^3(2p-1)(0.27) \\
&\ge 0
\end{align*}

Where in the third and second to last line we used $p \le \phi$. We will therefore assume $n \ge 5$ for the rest of this section. \\

The proof of Lemma \ref{boundsforall} now rests on the following lemma that we can iteratively apply:

\begin{intervalhopping} \label{intervalhopping}
Let $q_1, q_2$ be such that $\frac{1}{2} \le q_2 < q_1 \le \phi$ and assume that Lemma \ref{boundsforall} holds for $p = q_1$. Then Lemma \ref{boundsforall} also holds for all $p \in [q_2, q_1)$, as long as the following inequality is met:

\begin{equation}
q_2^5\big(1 - q_1 - s(q_1)\big) + 5q_2^{4}(1-q_2)\max\big(0, q_2 - q_1 - s(q_1)\big) > (1 - q_2)^{5} \label{hopper}
\end{equation}
\end{intervalhopping}

Lemma \ref{boundsforall} then follows from Lemma \ref{intervalhopping} if there exists a sequence $\phi = r_1 > r_2 > \ldots > r_m = \frac{1}{2}$ such that with $q_1 = r_i$ and $q_2 = r_{i+1}$ equation (\ref{hopper}) holds for all $i$ with $1 \le i \le m-1$. \\

To prove Lemma \ref{intervalhopping}, we need to check that a few functions are increasing.

\begin{cdfmonotone} \label{cdfmonotone}
Let $p, q \in [0, 1]$, let $j$ and $n$ be positive integers with $j \le n$, and define the partial sum $B_j(p) := \displaystyle \sum_{k=j}^n \binom{n}{k} p^k (1-p)^{n-k}$. If $p > q$, then $B_j(p) > B_j(q)$.
\end{cdfmonotone}

\begin{proof}
We will show that the derivative of $B_j(p)$ with respect to $p$ is positive, which is sufficient. Define $S_j = \displaystyle \sum_{k=j}^n \binom{n}{k} kp^k (1-p)^{n-k}$ and first of all note that $S_j \ge jB_j(p)$. Secondly, by the expected value of the binomial distribution, $np - S_j = \displaystyle \sum_{k=0}^{j-1} \binom{n}{k} kp^k (1-p)^{n-k} < j(1 - B_j(p))$. Now we calculate the derivative of $B_j(p)$.

\begin{align*}
B_j'(p) &= \sum_{k=j}^n \binom{n}{k} kp^{k-1} (1-p)^{n-k} + \sum_{k=j}^n \binom{n}{k} (k-n)p^k(1-p)^{n-k-1} \\
&= \frac{S_j}{p} + \frac{S_j}{1-p} - \frac{nB_j(p)}{1-p} \\
&= \frac{1}{p(1-p)}\big(S_j - npB_j(p) \big) \\
&= \frac{1}{p(1-p)}\big(S_j - (np - S_j)B_j(p) + S_jB_j(p) \big) \\
&> \frac{1}{p(1-p)}\big(S_j - j\big(1 - B_j(p)\big)B_j(p) + S_jB_j(p) \big) \\
&\ge \frac{1}{p(1-p)}\big(S_j - S_j\big(1 - B_j(p)\big) + S_jB_j(p) \big) \\
&= 0 \qedhere
\end{align*}
\end{proof}

\begin{vnmonotone} \label{vnmonotone}
Let $p, q$ be such that $0 \le q < p \le 1$. Then $v_{n,p} > v_{n,q}$ for all $n \in \mathbb{N}$.
\end{vnmonotone}

\begin{proof}
For $n = 1$ the inequality is rather trivial, so assume it is true by induction for all $k < n$. In a game with $n$ coins and probability of heads equal to $q$, let $q_j$ be the probability that you decide to set aside $j$ heads and note that $q_j \le B_j(q)$. Assuming you play optimally, this implies $v_{n,q} = q_0v_{n-1,q} + \displaystyle \sum_{j = 1}^n q_j(v_{n-j,q}+j)$. Now, from Lemma \ref{cdfmonotone} we know $B_j(p) > B_j(q) \ge q_j$ for all $j \in \{1, 2, \ldots, n\}$. This implies that, in the game with $n$ coins and probability $p$ of heads, one can still apply the strategy to set aside $j$ heads with probability $q_j$. We then get $v_{n,p} \ge q_0v_{n-1,p} + \displaystyle \sum_{j = 1}^n q_j(v_{n-j,p}+j) > q_0v_{n-1,q} + \displaystyle \sum_{j = 1}^n q_j(v_{n-j,q}+j) = v_{n,q}$.
\end{proof}

\begin{kansopnstijgt} \label{kansopnstijgt}
Let $p,q$ be such that $\frac{1}{2} \le q < p \le \phi$. Then for all $n \ge 2$ we have the inequality $p^n(1-p) > q^n(1-q)$.
\end{kansopnstijgt}

\begin{proof}
We will prove that, for all fixed $n \ge 2$, the function $p^n(1-p)$ is increasing as a function of $p$. Its derivative is equal to $np^{n-1}(1-p) - p^n$. Dividing this by $p^{n-1}$ gives us $n(1-p) - p \ge 2(1-p) - p = 2 - 3p > 0$, since $p \le \phi < \frac{2}{3}$.
\end{proof}

\begin{proof}[Proof of Lemma \ref{intervalhopping}]
Let $q_1, q_2$ be as in the statement of Lemma \ref{intervalhopping}, assume $p \in [q_2, q_1) \subseteq [\frac{1}{2}, \phi)$ and let $n \ge 5$ be a positive integer. We then apply equation (\ref{recform}) (in the same way as in the derivation of equation (\ref{recgeneral})) and the previous lemmas to get the following:

\begin{align*}
v_{n,p} &\ge v_{n-1,p} + 1 + p^n (n - 1 - v_{n-1,p}) + np^{n-1}(1 - p) \max(0, n - 2 + p - v_{n-1,p}) - (1 - p)^n \\
&\ge v_{n-1,p} + 1 + q_2^n (n - 1 - v_{n-1, q_1}) + nq_2^{n-1}(1 - q_2) \max(0, n - 2 + q_2 - v_{n-1, q_1}) - (1 - q_2)^n \\
&> v_{n-1,p} + 1 + q_2^n (1 - q_1 - s(q_1)) + nq_2^{n-1}(1 - q_2)\max\big(0, q_2 - q_1 - s(q_1)\big) - (1 - q_2)^n \\
&\ge v_{n-1,p} + 1 + (1 - q_2)^{n-{5}} \bigg(q_2^5\big(1 - q_1 - s(q_1)\big) + 5q_2^{4}(1-q_2)\max\big(0, q_2 - q_1 - s(q_1)\big) - (1 - q_2)^{5} \bigg) \\
&> v_{n-1,p} + 1  \qedhere
\end{align*}

\end{proof}

To actually make use of Lemma \ref{intervalhopping}, we need an upper bound on $s(q_1)$ to be able to check equation (\ref{hopper}). We will therefore look at the speed of convergence of $s_{n}(p)$ to $s(p)$. And to do this, we need a small preliminary lemma, the proof of which is left to the reader.

\begin{arithgeo}
For all $p$ with $0 \le p < 1$ and all $n \in \mathbb{N}$ we have the following equality:

\begin{equation*}
\sum_{k = n+1}^{\infty} kp^k = p^{n+1} \left(\frac{n}{1-p} + \frac{1}{(1-p)^2}\right)
\end{equation*}
\end{arithgeo}

\begin{expo} \label{expo}
For all $p \in [\frac{1}{2}, 1)$ for which $v_{n,p} \ge v_{n-1,p} + 1$ holds for all $n \ge 3$, we have the inequality $s(p) - s_{n}(p) < p^{n+1} \left(\frac{n}{1-p} + \frac{1}{(1-p)^2}\right)$ for all $n \in \mathbb{N}$. 
\end{expo}

\begin{proof}
With $s(p) \le 1-p$ the inequality is quickly checked for $n = 1$, so assume $n \ge 2$. The proof then starts off very similar to the proof of Lemma \ref{expophi}. The main difference is that Lemma \ref{expophi} applied equation (\ref{recformtwo}) which used the fact that, in that case, $v_{n,p} \ge v_{n-1,p} + 1$ already held for $n \ge 2$. In this case we only assume that it holds for $n \ge 3$, which means that we need to make use of equation (\ref{recgeneral}) here.

\begin{align*}
s(p) - s_n(p) &= \lim_{N \rightarrow \infty} \big[(v_{N,p} - N + 1 - p) - (v_{n,p} - n + 1 - p)\big] \\
&= \lim_{N \rightarrow \infty} \big[(v_{N,p} - v_{n,p}) - (N - n)\big] \\ 
&= \lim_{N \rightarrow \infty} \left[\sum_{k = n+1}^N v_{k,p} - v_{k-1,p} - 1\right] \\ 
&= \sum_{k = n+1}^{\infty} \Big(p^k(k - 1 - v_{k-1,p}) + kp^{k-1}(1-p) \max(0, k - 2 + p - v_{k-1,p}) - (1-p)^k\Big) \\
&< \sum_{k = n+1}^{\infty} \left(p^k + kp^{k-1}(1-p)\right)(k - 1 - v_{k-1,p}) \\
&\le \sum_{k = n+1}^{\infty} \left(p^k + kp^{k-1}(1-p)\right)(2 - v_{2,\frac{1}{2}}) \\
&= \sum_{k = n+1}^{\infty} \frac{5}{8}\left(p^k + kp^{k-1}(1-p)\right) \\
&< \sum_{k = n+1}^{\infty} kp^k  \\
&= p^{n+1} \left(\frac{n}{1-p} + \frac{1}{(1-p)^2}\right) \qedhere
\end{align*}
\end{proof}

\begin{cphi} \label{cphi}
For $p = \phi$ we have $s(p) < 0.121$.
\end{cphi}

\begin{proof}
Since $v_{n,p} \ge v_{n-1,p} + 1$ is known to hold for $p = \phi$ by Lemma \ref{boundsforphi}, we can (preferably with a computer) calculate $v_{30,\phi} \approx 29.7389$ and then apply Lemma \ref{expo} to obtain $s(p) < 0.121$.
\end{proof}

Choosing $q_1 = \phi$ and $q_2 = 0.57$, one can check that equation (\ref{hopper}) is satisfied by applying Corollary \ref{cphi}, so that Lemma \ref{boundsforall} actually holds for all $p \ge 0.57$. And we can now iterate this; choose $q_1 = 0.57$, calculate $v_{30,q_1} \approx 29.6038$ to obtain $s(q_1) < 0.034$ with Lemma \ref{expo}, and now equation (\ref{hopper}) can be checked with $q_2 = 0.55$. \\

By continuing in this manner with the following sequence, the entire interval $[\frac{1}{2}, \phi]$ is covered: $\phi, 0.57, 0.55, 0.54, 0.535, 0.529, 0.523, 0.518, 0.513, 0.508, 0.504, \frac{1}{2}$. \\

For completeness' sake, here are tables with the aforementioned sequence and upper bounds on the corresponding values of $v_{q_1,30}$ and $s(q_1) < s_{30}(q_1) + 0.0001$:
\newpage
\begin{table}[ht]
\begin{tabular}{|l|llllll|} \hline 
$q_1$    & $\phi$   & $0.57$  & $0.55$ & \hspace{4pt} $0.54$  & \hspace{4pt} $0.535$  & \hspace{4pt} $0.529$  \\ \hline 
$v_{q_1,30}$ & $29.7389$   & $29.6038$  & $29.5508$ & \hspace{4pt} $29.5253$  & \hspace{4pt} $29.5117$  & \hspace{4pt} $29.4947$  \\ \hline 
$s(q_1)$ & $0.121$  & $0.034$ & $0.001$ & $-0.014$ & $-0.023$ & $-0.034$ \\ \hline 
$q_2$    & $0.57$   & $0.55$  & $0.54$  & \hspace{4pt} $0.535$  & \hspace{4pt} $0.529$ & \hspace{4pt} $0.523$ \\ \hline 
\end{tabular}
\end{table}

\begin{table}[ht]
\begin{tabular}{|l|lllll|} \hline 
$q_1$    & \hspace{4pt} $0.523$  & \hspace{4pt} $0.518$  & \hspace{4pt} $0.513$  & \hspace{4pt} $0.508$  & \hspace{4pt} $0.504$ \\ \hline 
$v_{q_1,30}$ & $29.4769$   & $29.4614$  & $29.4452$ & \hspace{4pt} $29.4285$  & \hspace{4pt} $29.4146$ \\ \hline 
$s(q_1)$ & $-0.046$ & $-0.056$ & $-0.067$ & $-0.079$ & $-0.089$ \\ \hline
$q_2$    & \hspace{4pt} $0.518$  & \hspace{4pt} $0.513$  & \hspace{4pt} $0.508$  & \hspace{4pt} $0.504$ & \hspace{4pt} $0.5$ \\ \hline 
\end{tabular}
\end{table}
\end{proof}

Since $v_{n,p} \ge v_{n-1,p} + 1$ is now proven for all $p \ge \frac{1}{2}$, we conclude that $s(p)$ exists for all $p \ge \frac{1}{2}$ and it actually leads to two more colloraries. 

\begin{cpsmallerthanpminusone}
For all $p$ with $\frac{1}{2} \le p < 1$ we have $s(p) < 1 - p$.
\end{cpsmallerthanpminusone}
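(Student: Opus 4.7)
The plan is to split on the sign of $s(p)$. If $s(p)\le 0$, the statement is trivial since $p<1$ gives $s(p)\le 0<1-p$. The substantive case is $s(p)>0$; here I will reuse the argument of Lemma \ref{expophi}, but starting the expansion from a sufficiently large index rather than from the minimal choice $N_0=\lceil\log(1-p)/\log p\rceil$ used there. A larger starting index is needed because for $p\in[\frac{1}{2},\phi)$ one has $p^2>1-p$ (so the small $N_0$ from Lemma \ref{expophi} fails the bound $p^{N_0}\le 1-p$), and also because Proposition \ref{vnincrease} only provides $v_{n,p}\ge v_{n-1,p}+1$ from $n=3$ onwards, not from $n=2$.

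Since $s_n(p)$ is nondecreasing in $n\ge 2$ (Proposition \ref{vnincrease}, which is now established in full thanks to Lemma \ref{boundsforall}) and converges to $s(p)>0$, I can pick $N_0\ge 3$ satisfying both (i) $s_{N_0}(p)\ge 0$, and (ii) $p^{N_0+1}<1-p$. Condition (i) guarantees $v_{k-1,p}\ge k-2+p$ for every $k>N_0$ by one more iteration of Proposition \ref{vnincrease}, so the max in equation (\ref{recgeneral}) vanishes for every such $k$ and the recursion collapses to the simpler form (\ref{recformtwo}). Condition (ii) is a purely eventual tail condition and holds for all $N_0$ large enough since $p<1$.

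With $\delta:=N_0-v_{N_0,p}>0$, iterating Proposition \ref{vnincrease} also gives $k-1-v_{k-1,p}\le\delta$ for every $k>N_0$. Substituting into the telescoping identity
$$s(p)-(1-p)=-\delta+\sum_{k=N_0+1}^{\infty}(v_{k,p}-v_{k-1,p}-1)$$
and bounding each summand by $\delta p^k-(1-p)^k$ using (\ref{recformtwo}) reproduces, step for step, the closing computation of Lemma \ref{expophi}:
$$s(p)-(1-p)\;\le\;-\delta\left(1-\frac{p^{N_0+1}}{1-p}\right)-\frac{(1-p)^{N_0+1}}{p},$$
which is strictly negative by (ii) combined with $\delta>0$.

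The only delicate point is the simultaneous availability of an $N_0$ meeting both (i) and (ii); but each is a purely eventual condition on $N_0$ for fixed $p$, so both are met for all sufficiently large $N_0$. No uniformity in $p$ is needed, as the corollary is a pointwise statement, and no new tools beyond Proposition \ref{vnincrease} and the recursive identities (\ref{recgeneral})--(\ref{recformtwo}) are invoked.
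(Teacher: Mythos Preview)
Your argument is correct. Splitting on the sign of $s(p)$ and, in the nontrivial case $s(p)>0$, rerunning the Lemma~\ref{expophi} telescoping from an index $N_0$ large enough that both $s_{N_0}(p)\ge 0$ and $p^{N_0+1}<1-p$ hold, is a valid route: condition~(i) forces the $\max$ in (\ref{recgeneral}) to vanish for all $k>N_0$, so (\ref{recformtwo}) applies, and then the computation closes exactly as in Lemma~\ref{expophi}. One cosmetic slip: your parenthetical ``for $p\in[\tfrac12,\phi)$ one has $p^2>1-p$'' has the inequality reversed (on that range $p^2+p<1$, hence $p^2<1-p$). This is harmless, since the genuine obstruction to reusing the small $N_0$ of Lemma~\ref{expophi} is not condition~(ii) but condition~(i): for $p<\phi$ one has $s_2(p)<0$, so (\ref{recformtwo}) is not yet valid for small $k$, which is precisely what your condition~(i) repairs.

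The paper's own proof is quite different and considerably shorter. Rather than rerunning the estimate pointwise, it invokes monotonicity of $v_{n,p}$ in $p$ (Lemma~\ref{vnmonotone}) to compare against the single value $p=\phi$: from $v_{n,p}\le v_{n,\phi}$ one gets $s(p)+p\le s(\phi)+\phi$, and then Lemma~\ref{expophi} at $p=\phi$ gives $s(p)\le s(\phi)+\phi-p<(1-\phi)+\phi-p=1-p$. So the paper leverages an extra monotonicity lemma to reduce everything to a one-point application of Lemma~\ref{expophi}, whereas your approach avoids Lemma~\ref{vnmonotone} entirely at the cost of redoing the telescoping. Your version also yields an explicit quantitative gap $\delta\bigl(1-p^{N_0+1}/(1-p)\bigr)+(1-p)^{N_0+1}/p$ between $1-p$ and $s(p)$, which the paper's comparison argument does not directly provide.
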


\begin{proof}
For $p \ge \phi$ this is Lemma \ref{expophi}. For $p < \phi$ we get $s(p) \le s(\phi) + \phi - p < 1 - p$.
\end{proof}

\begin{continuouscp} \label{cont}
In the interval $[\frac{1}{2}, 1]$, $s_n(p)$ converges uniformly to $s(p)$. In particular, the function $s(p)$ is continuous in this interval.
\end{continuouscp}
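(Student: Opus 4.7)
My plan is to partition $[\frac{1}{2}, 1]$ into a compact piece bounded away from $1$, where Lemma \ref{expo} delivers a uniform tail estimate directly, and a short neighbourhood $[1-\delta, 1]$ of $p = 1$, where the useful ingredient is instead the bound $s(p) \le 1-p$ from Corollary \ref{cpsmallerthanpminusone}. Fix $\epsilon > 0$; the goal is to exhibit $N$ with $\sup_{p \in [1/2, 1]} |s(p) - s_n(p)| < \epsilon$ for every $n \ge N$.

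For the neighbourhood of $1$, the key observation is that by Proposition \ref{vnincrease} the sequence $s_n(p)$ is non-decreasing in $n$ for $n \ge 2$, so for every such $n$
\[0 \le s(p) - s_n(p) \le s(p) - s_2(p) \le (1 - p) - s_2(p),\]
where $s_2(p) = -p^3 + 2p - 1$ is an explicit polynomial vanishing at $p = 1$. Hence the right-hand side is a polynomial in $p$ equal to $0$ at $p = 1$, and I can pick $\delta > 0$ so that this polynomial stays below $\epsilon$ throughout $[1-\delta, 1]$. Crucially, this estimate is uniform in $n$, so no size restriction on $n$ is needed on this piece.

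On the complementary interval $[\frac{1}{2}, 1-\delta]$, Lemma \ref{boundsforall} verifies the hypothesis of Lemma \ref{expo}, and I would plug it in to get
\[0 \le s(p) - s_n(p) < p^{n+1}\left(\frac{n}{1-p} + \frac{1}{(1-p)^2}\right) \le (1-\delta)^{n+1}\left(\frac{n}{\delta} + \frac{1}{\delta^2}\right).\]
The right-hand side is independent of $p$ and tends to $0$ as $n \to \infty$, so it lies below $\epsilon$ for all $n \ge N$ with a suitable $N = N(\delta, \epsilon)$. Combining the two cases yields the claimed uniform convergence on $[\frac{1}{2}, 1]$. Since each $s_n(p) = v_{n,p} - n + 1 - p$ is continuous in $p$ --- an easy induction on equation (\ref{recform}) using that a maximum of finitely many continuous functions is continuous --- the uniform limit $s(p)$ is continuous as well.

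The main obstacle is the behaviour near $p = 1$: the error estimate of Lemma \ref{expo} carries a factor of $(1-p)^{-2}$ that blows up there, so that lemma alone cannot cover the full closed interval. What rescues the argument is that the boundary value is trivial, since the game at $p = 1$ is deterministic and $s_n(1) = s(1) = 0$; near $p = 1$ both $s(p)$ and $s_n(p)$ are pinched between $s_2(p)$ and $1-p$, two explicit quantities that themselves vanish at $p = 1$, and the monotonicity of $s_n$ in $n$ makes this pinch work simultaneously for every $n \ge 2$.
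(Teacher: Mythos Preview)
Your proof is correct and follows essentially the same two-piece strategy as the paper: use Lemma~\ref{expo} on a compact subinterval bounded away from $1$, and near $p=1$ sandwich $s(p)-s_n(p)$ between $0$ and $(1-p)-s_2(p)$ via monotonicity in $n$ and the trivial bound $s(p)\le 1-p$. The paper's version is cosmetically slicker in that it splits at $p=1-\epsilon$ and uses $s_n(p)\ge s_1(p)=0$ (valid there since $p>\phi$) to get $|s(p)-s_n(p)|\le 1-p<\epsilon$ directly, but the substance is identical. One small citation quibble: on $[\tfrac{1}{2},1-\delta]$ you invoke Lemma~\ref{boundsforall} for the hypothesis of Lemma~\ref{expo}, but that lemma only covers $p\le\phi$; you really want Proposition~\ref{vnincrease} (or Lemmas~\ref{boundsforphi} and~\ref{boundsforall} together), which gives $v_{n,p}\ge v_{n-1,p}+1$ on the whole of $[\tfrac{1}{2},1)$.
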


\begin{proof}
The second sentence follows from the first as we claim that $s_n(p)$ is continuous, which makes $s(p)$ a uniform limit of continuous functions, which is well-known to imply continuity. Indeed, recall that $s_n(p) = v_{n,p} - n + 1 - p$ and note that $v_{n,p}$ is continuous, since it is by equation (\ref{recform}) a sum of maxima of continuous functions of $p$. \\

With that out of the way, let us show uniform convergence of $s_n(p)$ to $s(p)$. Let $\epsilon > 0$ be given, assume $\epsilon < 1 - \phi$ without loss of generality, and choose $N$ large enough such that $p^{n+1} \left(\frac{n}{1-p} + \frac{1}{(1-p)^2}\right) < \epsilon$ for $p = 1 - \epsilon$ and all $n \ge N$. Note that this implies $p^{n+1} \left(\frac{n}{1-p} + \frac{1}{(1-p)^2}\right) < \epsilon$ for all $p < 1 - \epsilon$ and all $n \ge N$ as well. Therefore, $|s(p) - s_n(p)| < \epsilon$ for all $p \in [\frac{1}{2}, 1 - \epsilon]$ by Lemma \ref{expo}, whereas for all $p \in (1 - \epsilon, 1]$ we have $|s(p) - s_n(p)| \le 1 - p - s_n(p) \le 1 - p - s_1(p) = 1 - p < \epsilon$.
\end{proof}

\section{Bounds on $s(p)$}
In this section we will take a deeper look at $s(p)$ and finish the proofs of Propositions \ref{cpincrease} and \ref{cpineqsecondpair}. \\

Consider $p_1 = 0.5495021777642$ and choose $n$ to be equal to, say, $100$. Now use a computer and equation (\ref{recgeneral}) to calculate $v_{n,p_1}$. By Lemma \ref{expo} we then know that $s(p_1) < s_n(p) + p_1^{n+1} \left(\frac{n}{1-p_1} + \frac{1}{(1-p_1)^2}\right)$ and actually working out the latter shows in this case $s(p_1) < 0$. On the other hand, with $p_2 = 0.54950217776421 = p_1 + 10^{-14}$, one can calculate that $v_{53,p_2} > 52 + p_2$, so that $s(p_2) > s_{53}(p_2) > 0$. The function $s(p)$ is continuous by Lemma \ref{cont}, which implies by the intermediate value theorem that there exists a $p \in [p_1, p_2]$ such that $s(p)$ is exactly equal to $0$. We will show that this value is unique, which finishes the proof of Proposition \ref{cpincrease}. In order to do this, we will use equation (\ref{recgeneral}) to get a recursive formula for $s_{n}(p)$.

\begin{recursived} \label{recursived}
Let $p$ be such that $\frac{1}{2} \le p \le 1$. We then have the following recursive formula, valid for all $n \in \mathbb{N}$: 

\begin{equation*}
s_{n}(p) = s_{n-1}(p) (1 - p^n) - \min\big(0, s_{n-1}(p)\big) np^{n-1}(1-p) + p^n(1-p) - (1-p)^n \label{reccn}
\end{equation*}
\end{recursived}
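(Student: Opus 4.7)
The plan is to treat this as a direct algebraic reformulation of equation (\ref{recgeneral}) in terms of the shifted quantity $s_n(p) = v_{n,p} - n + 1 - p$. Since equation (\ref{recgeneral}) was established under the assumption $v_{k,p} \ge v_{k-1,p} + 1$ for $3 \le k \le n-1$, and this hypothesis is satisfied for $p \ge \tfrac{1}{2}$ by Lemma \ref{boundsforall} (together with the base cases handled in Lemma \ref{boundsforphi} and the $n = 3$ computation), no additional probabilistic argument is needed here.

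The first step is to invert the definition of $s_{n-1}$ to write $v_{n-1,p} = s_{n-1}(p) + n - 2 + p$. Substituting this into the two places where $v_{n-1,p}$ appears inside equation (\ref{recgeneral}) yields the two convenient identities
\begin{equation*}
n - 1 - v_{n-1,p} = 1 - p - s_{n-1}(p) \qquad\text{and}\qquad n - 2 + p - v_{n-1,p} = -s_{n-1}(p).
\end{equation*}
In particular the maximum term simplifies to $\max\bigl(0, -s_{n-1}(p)\bigr) = -\min\bigl(0, s_{n-1}(p)\bigr)$, which is where the $\min$ in the target formula comes from.

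The second step is to plug these in and subtract $n - 1 + p$ from both sides of equation (\ref{recgeneral}) so that $v_{n,p} - n + 1 - p = s_n(p)$ appears on the left and $v_{n-1,p} + 1 - n + 1 - p = s_{n-1}(p)$ is recovered on the right. After this substitution, the right-hand side becomes
\begin{equation*}
s_{n-1}(p) + p^n\bigl(1 - p - s_{n-1}(p)\bigr) - np^{n-1}(1-p)\min\bigl(0, s_{n-1}(p)\bigr) - (1-p)^n.
\end{equation*}
Collecting the two terms containing $s_{n-1}(p)$ as $s_{n-1}(p)(1 - p^n)$ and separating out $p^n(1-p)$ gives exactly the asserted recursion. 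There is essentially no obstacle here — the only thing to be careful about is the sign bookkeeping when rewriting $\max(0, -s_{n-1}(p))$ as $-\min(0, s_{n-1}(p))$, and verifying that the hypothesis $p \ge \tfrac{1}{2}$ needed for equation (\ref{recgeneral}) is in force (the statement of the lemma already assumes this).
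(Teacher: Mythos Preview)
Your proposal is correct and follows essentially the same approach as the paper: both proofs are a direct algebraic rewriting of equation (\ref{recgeneral}) via the substitution $v_{n-1,p} = s_{n-1}(p) + n - 2 + p$, converting $\max(0,-s_{n-1}(p))$ into $-\min(0,s_{n-1}(p))$ and collecting terms. If anything, you are slightly more explicit than the paper in noting that the validity of equation (\ref{recgeneral}) for $p \ge \tfrac{1}{2}$ relies on Proposition~\ref{vnincrease}.
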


\begin{proof}
By definition we have $s_{n}(p) = v_{n,p} - n + 1 - p$, so we can just rewrite equation (\ref{recgeneral}) in terms of $s_{n}(p)$ and $s_{n-1}(p)$.

\begin{align*}
s_{n}(p) &= v_{n,p} - n + 1 - p \\
&= \big(v_{n-1,p} + 1 + p^n(n - 1 - v_{n-1,p}) + np^{n-1}(1-p) \max(0, n - 2 + p - v_{n-1,p}) - (1-p)^n\big) \\
&\hspace{45pt}- n + 1 - p \\
&= v_{n-1,p} - n + 2 - p + p^n\big(1 - p - s_{n-1}(p)\big) - \min\big(0, s_{n-1}(p)\big)np^{n-1}(1-p) - (1-p)^n \\
&= s_{n-1}(p) (1 - p^n) - \min\big(0, s_{n-1}(p)\big) np^{n-1}(1-p) + p^n(1-p) - (1-p)^n \qedhere
\end{align*}
\end{proof}

We will use the above to show that $s(p)$ is an increasing function of $p$ for $p \in [\frac{1}{2}, p_0]$.

\begin{dipvsdiq} \label{dipvsdiq}
Let $p$ and $q$ be such that $\frac{1}{2} \le q < p < \phi$ and define $C_n := \displaystyle \frac{5}{6} \prod_{j=3}^{n}\left(1 - (j+1)\phi^j\right)$ and $\displaystyle C := \lim_{n \rightarrow \infty} C_n$. Then $C > 0$ and for all $n \in \mathbb{N}$ with $s_{n-1}(p) \le 0$, we have $s_n(p) > s_n(q) + C_n(p-q) > s_n(q) + C(p-q) $. 
\end{dipvsdiq}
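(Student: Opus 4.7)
My plan is to prove the two claims separately, starting with $C > 0$. I would first check that each factor $1 - (j+1)\phi^j$ lies strictly between $0$ and $1$ for $j \ge 3$. The upper bound is immediate. For the lower bound, the ratio $\frac{(j+2)\phi^{j+1}}{(j+1)\phi^j} = \frac{(j+2)\phi}{j+1}$ is less than $1$ for all $j \ge 1$, so $(j+1)\phi^j$ is decreasing in $j$ and bounded above by $4\phi^3 = 4(2\phi - 1) < 1$. Since $\sum_{j \ge 3}(j+1)\phi^j$ converges, the infinite product converges to a strictly positive limit, giving $C > 0$; moreover, each factor lying in $(0,1)$ makes the partial products $C_n$ strictly decreasing to $C$, so $C_n > C$ for every $n$.

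For the main inequality, I would induct on $n$. The base case $n = 2$ is the easiest: here $s_1 \equiv 0$ (so the hypothesis is automatic) and a direct computation gives $s_2(p) - s_2(q) = (p-q)[2 - (p^2 + pq + q^2)]$. Since $p, q < \phi$ and $3\phi^2 = 3(1-\phi) < 7/6$ (which reduces to $\phi > 11/18$), the bracket exceeds $5/6 = C_2$, as required.

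For the inductive step $n \ge 3$, assume $s_{n-1}(p) \le 0$. To apply the IH, I need $s_{n-2}(p) \le 0$: for $n = 3$ this is $s_1(p) = 0$, and for $n \ge 4$ it follows from Proposition \ref{vnincrease} (which gives $s_{n-1}(p) \ge s_{n-2}(p)$). The IH then yields $s_{n-1}(p) > s_{n-1}(q) + C_{n-1}(p-q)$, and in particular $s_{n-1}(q) < s_{n-1}(p) \le 0$. Writing $a_n(r) := 1 - r^n - nr^{n-1}(1-r)$ and $b_n(r) := r^n(1-r) - (1-r)^n$, Lemma \ref{recursived} applied at both $p$ and $q$ (with $\min(0, s_{n-1}) = s_{n-1}$) gives $s_n(r) = s_{n-1}(r)a_n(r) + b_n(r)$, and I would expand
\[s_n(p) - s_n(q) = [s_{n-1}(p) - s_{n-1}(q)]a_n(p) + s_{n-1}(q)[a_n(p) - a_n(q)] + [b_n(p) - b_n(q)].\]
Three sign checks handle the three brackets: (i) $a_n'(r) = -n(n-1)r^{n-2}(1-r) \le 0$, so $a_n$ is decreasing with $a_n(p) > 0$, and the first bracket exceeds $C_{n-1}(p-q)a_n(p)$ by IH; (ii) $s_{n-1}(q) \le 0$ and $a_n(p) - a_n(q) \le 0$ make the second bracket nonnegative; (iii) dropping the nonnegative tail $n(1-r)^{n-1}$, one has $b_n'(r) \ge r^{n-1}[n - (n+1)r] > 0$ on $[\tfrac{1}{2}, \phi]$ since $\phi < n/(n+1)$ for all $n \ge 2$.

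The remaining task — and in my view the main obstacle — is to verify $C_{n-1}a_n(p) \ge C_n$. Using $p^n + np^{n-1}(1-p) = p^{n-1}(n - (n-1)p)$, whose derivative $n(n-1)p^{n-2}(1-p)$ is nonnegative, this quantity is increasing in $p$ and hence bounded above by its value at $\phi$, where the identity $1 - \phi = \phi^2$ collapses it to $\phi^n(n\phi + 1) = n\phi^{n+1} + \phi^n$. Thus $a_n(p) \ge 1 - n\phi^{n+1} - \phi^n = [1 - (n+1)\phi^n] + n\phi^n(1 - \phi) = C_n/C_{n-1} + n\phi^{n+2}$, so multiplying by $C_{n-1} > 0$ gives $C_{n-1}a_n(p) \ge C_n + C_{n-1}n\phi^{n+2} > C_n$. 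Combined with the three bounds above, $s_n(p) - s_n(q) > C_n(p-q) > C(p-q)$, completing the induction. The key observation is that the excess $n\phi^n(1-\phi) = n\phi^{n+2}$ exactly matches the telescoping factor $1 - (n+1)\phi^n$ defining $C_n/C_{n-1}$, which is what makes the induction compatible with the definition of $C_n$.
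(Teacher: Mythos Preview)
Your proof is correct and follows essentially the same inductive route as the paper: the same base case $s_2(p)-s_2(q)=(p-q)(2-p^2-pq-q^2)>\tfrac{5}{6}(p-q)$, and the same use of Lemma~\ref{recursived} in the step. Your version is more explicit in a few places where the paper is terse --- you decompose the difference into three signed pieces, you explicitly verify $s_{n-2}(p)\le 0$ via Proposition~\ref{vnincrease} before invoking the induction hypothesis, and you spell out the key inequality $C_{n-1}a_n(p)\ge C_n$ using $1-\phi=\phi^2$ --- but the underlying argument is the same.
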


\begin{proof}
To prove that $C$ is positive, it is sufficient to show $\displaystyle \prod_{j=4}^{\infty}\left(1 - (j+1)\phi^j\right) > 0$, since $\frac{5}{6}(1 - 4\phi^3) > 0$. By taking the logarithm, this is equivalent with proving that the infinite sum $\displaystyle \sum_{j=4}^{\infty} \log\left(1 - (j+1)\phi^j\right)$ converges. And this in turn follows from the inequality $\log(1 - x) > -2x$ which, as mentioned before, holds for all $x < 0.79$. \\

As for comparing $s_n(p)$ to $s_n(q)$, for $n = 2$ we know $s_n(p) - s_n(q) = (2p - p^3) - (2q - q^3) = (p-q)(2 - p^2 - pq - q^2) > \frac{5}{6}(p-q)$, as $2 - p^2 - pq - q^2 > 2 - 3\phi^2 > \frac{5}{6}$. Now assume by induction that $s_{n-1}(p) > s_{n-1}(q) + C_{n-1}(p-q)$, which we just proved to be true for $n = 3$ as the product is then empty. Further assume $s_{n-1}(p) \le 0$ as the Lemma is otherwise vacuous, which by induction implies that $s_{n-1}(q)$ is smaller than $0$ as well. Now we calculate $s_{n}(p)$ by applying Lemma \ref{recursived}.

\begin{align*}
s_{n}(p) &= s_{n-1}(p) \big(1 - p^n - np^{n-1}(1-p)\big) + p^n(1-p) - (1-p)^n \\
&> \big(s_{n-1}(q)  + C_{n-1}(p-q)\big) \big(1 - q^n - nq^{n-1}(1-q)\big) + q^n(1-q) - (1-q)^n \\
&> s_{n-1}(q)\big(1 - q^n - nq^{n-1}(1-q)\big) + q^n(1-q) - (1-p)^q + C_n(p-q) \\
&= s_{n}(q) + C_n(p-q) \qedhere
\end{align*}

\end{proof}

\begin{proof}[Proof of Proposition \ref{cpincrease}]
The convergence of $s_{n}(p)$ to $s(p)$ is already dealt with, as is the existence of the constant $p_0$. The only thing we still need to do is show uniqueness; there is only one $p$ with $\frac{1}{2} \le p < 1$ for which $s(p) = 0$. If $\phi \le p < 1$, then $s(p) > s_{2}(p) \ge 0$, so assume by contradiction $p \in [\frac{1}{2}, \phi)$ is different from $p_0$ while $s(p) = s(p_0) = 0$. Since $s_{n}(p)$ and $s_{n}(p_0)$ converge to $0$ from below, there exists an $n$ such that both $s_{n}(p)$ and $s_{n}(p_0)$ lie in the interval $(-C|p - p_0|, 0)$. But Lemma \ref{dipvsdiq} implies $|s_{n}(p) - s_{n}(p_0)| > C|p - p_0|$, which contradicts the fact that $s_{n}(p)$ and $s_{n}(p_0)$ must both be between $-C|p - p_0|$ and $0$.
\end{proof}

Now we move on to the proof of Proposition \ref{cpineqsecondpair}. Let $n \ge 2$ be arbitrary. Then $s_{n}(p_0) < s(p_0) = 0$, so there exists an $\epsilon > 0$ with $s_{n}(p_0) = -\epsilon$. Since $s_{n}(p)$ is a continuous function of $p$ for this value of $n$, there exists a $\delta > 0$ such that for all $p \in (p_0, p_0 + \delta)$ we get $|s_{n}(p) - s_{n}(p_0)| < \epsilon$. This in turns implies $s_{n}(p) = s_{n}(p) - s_{n}(p_0) + s_{n}(p_0) \le |s_{n}(p) - s_{n}(p_0)| + s_{n}(p_0) < \epsilon - \epsilon = 0$. By the definition of $p_0$ and the fact that $p$ is larger than $p_0$, we know $s(p) > 0$, implying that there exists a smallest integer $n(p) \ge 2$ with $s_{n(p)}(p) \ge 0$. And by construction we have $n(p) > n$. We thusly obtain the following. 

\begin{nparbitrarilylarge} \label{nplarge}
For every $n \in \mathbb{N}$ there exists a $\delta > 0$ such that for all $p \in (p_0, p_0 + \delta)$ we have $n(p) > n$.
\end{nparbitrarilylarge}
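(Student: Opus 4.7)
The plan is to exploit the strict inequality $s_n(p_0) < s(p_0) = 0$ together with the continuity of $s_n$ in $p$. First, I would fix an arbitrary $n \in \mathbb{N}$. By Proposition \ref{vnincrease} the sequence $s_m(p_0)$ is strictly increasing in $m$ for $m \ge 2$ (the exceptional equality case $p = \frac{1}{2}$, $n = 3$ does not apply, since $p_0 > \frac{1}{2}$), and by Proposition \ref{cpincrease} it converges to $s(p_0) = 0$. Consequently $s_n(p_0) < 0$, and I would set $\epsilon := -s_n(p_0) > 0$.

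Next, by Corollary \ref{cont} the function $p \mapsto s_n(p)$ is continuous (as was noted in that corollary's proof, $s_n$ is even continuous for each fixed $n$ because $v_{n,p}$ is a finite sum of maxima of continuous functions of $p$ via equation (\ref{recform})). Hence one can choose $\delta > 0$ such that $|s_n(p) - s_n(p_0)| < \epsilon$ whenever $|p - p_0| < \delta$. For every such $p$ this gives $s_n(p) < s_n(p_0) + \epsilon = 0$.

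Finally, on the sub-interval $(p_0, p_0 + \delta)$ we also have $s(p) > 0$ by Proposition \ref{cpincrease}, so the smallest integer $n(p) \ge 2$ with $s_{n(p)}(p) \ge 0$ exists; but since we just showed $s_n(p) < 0$, this smallest integer must exceed $n$, i.e.\ $n(p) > n$, which is what we want.

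There is no serious obstacle here: the lemma is a straightforward packaging of results already in hand. The only point that needs a moment's care is the strictness $s_n(p_0) < 0$ (as opposed to $\le 0$), which depends on the strict form of Proposition \ref{vnincrease}; this is available because $p_0 \approx 0.5495 > \frac{1}{2}$, so the single exceptional equality case is avoided.
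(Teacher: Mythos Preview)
Your argument is correct and is essentially identical to the paper's: both set $\epsilon = -s_n(p_0) > 0$, use continuity of $p \mapsto s_n(p)$ to find $\delta$ with $s_n(p) < 0$ on $(p_0, p_0+\delta)$, and then invoke $s(p) > 0$ there to force $n(p) > n$. The only cosmetic point is that the paper starts from $n \ge 2$, whereas your phrasing ``arbitrary $n \in \mathbb{N}$'' would give $\epsilon = -s_1(p_0) = 0$ when $n = 1$; this is harmless since $n(p) \ge 2$ by definition, so the case $n \le 1$ is trivial.
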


\begin{proof}[Proof of Proposition \ref{cpineqsecondpair}]
 Let $\epsilon > 0$ be given and choose $\delta_1 > 0$ small enough such that for all $p \in (p_0, p_0 + \delta_1)$ we get both $s(p) < \left(\frac{1-p}{2}\right)\epsilon$ and $\left(\frac{1 -p}{p}\right)^{n(p)} < \left(\frac{1-p}{2}\right)\epsilon$. The first is possible by Corollary \ref{cont} and the second is possible by Lemma \ref{nplarge}. This then implies for all $n$ that $n - v_{n,p} = 1 - p - s_{n}(p) > 1 - p - s(p) > 1 - p - \left(\frac{1-p}{2}\right)\epsilon$, and for $n \ge n(p)$ that $(1-p)^n < \left(\frac{1-p}{2}\right)\epsilon p^n$. We then get:

\begin{align*}
s(p) &> s(p) - (v_{n(p),p} - n(p) + 1 - p) \\
&= \lim_{N \rightarrow \infty} (v_{N,p} - N + 1 - p) - (v_{n(p),p} - n(p) + 1 - p) \\
&= \lim_{N \rightarrow \infty} \sum_{k = n(p)+1}^{N} v_{k,p} - v_{k-1,p} - 1 \\
&= \sum_{k = n(p)+1}^{\infty} \Big(p^k(k - 1 - v_{k-1,p}) - (1-p)^k\Big) \\
&> \sum_{k = n(p)+1}^{\infty} \Big(p^k\left(1 - p - \left(\frac{1-p}{2}\right)\epsilon\right) - \left(\frac{1-p}{2}\right)\epsilon p^k\\
&= \sum_{k = n(p)+1}^{\infty} (1 - \epsilon)(1 - p)p^k \\
&= (1 - \epsilon)p^{n(p)+1}
\end{align*}

On the other hand, note that from equation (\ref{recgeneral}) it follows that $v_{n,p} - v_{n-1,p} - 1 < (n+1)p^n$. This in turn implies the following inequality:

\begin{align*}
v_{n,p} &= v_{n-1,p} + 1 + p^n(n - 1 - v_{n-1,p}) + np^{n-1}(1-p) \max(0, n - 2 + p - v_{n-1,p}) - (1-p)^n \\
&< v_{n-1,p} + 1 + p^n(n - v_{n,p}) + np^{n-1}(1-p) \max(0, n - 1 + p - v_{n,p}) + (n+1)^2p^{2n}
\end{align*}

Furthermore note that for $n \ge n(p)$, the term $np^{n-1}(1-p) \max(0, n - 1 + p - v_{n,p})$ is equal to $0$. Now choose $\delta_2 > 0$ small enough so that $(n(p)+1)^2p^{n(p)} < \epsilon$ for all $p \in (p_0, p_0 + \delta_2)$. We can then upper bound $s(p)$ as follows:

\begin{align*}
s(p) &< s(p) - (v_{n(p)-1,p} - (n(p) - 1) + 1 - p) \\
&< p^{n(p)}(n(p) - v_{n(p),p}) + (n(p)+1)^2p^{2n(p)} + s(p) - (v_{n(p),p} - n(p) + 1 - p)  \\
&= p^{n(p)}(n(p) - v_{n(p),p}) + (n(p)+1)^2p^{2n(p)} + \sum_{k = n(p)+1}^{\infty} \Big(p^k(k - 1 - v_{k-1,p}) - (1-p)^k\Big) \\
&< p^{n(p)}(1-p) + \epsilon p^{n(p)} + \sum_{k = n(p)+1}^{\infty} (1-p)p^k \\
&= (1 + \epsilon)p^{n(p)}
\end{align*}

Taking $\delta = \min(\delta_1, \delta_2)$ finishes the proof.
\end{proof}

\newpage
\section{Differentiability}
This section is devoted to proving Proposition \ref{differentiable}; differentiability of $s(p)$ in the point $p = p_0$. In order to do this, we first have to prove that the quotient $\frac{s_n(p) - s_n(q)}{p - q}$ is bounded. Lemma \ref{dipvsdiq} already provides a conditional lower bound on this quotient and it furthermore shows that if $s'(p_0)$ exists, then it must be positive. We shall also need a corresponding upper bound.

\begin{dipvsdiqtwo} \label{dipvsdiqtwo}
Let $p$ and $q$ be such that $\frac{1}{2} \le q < p < \phi$. We then have the inequality $s_n(p) < s_n(q) + 17(p-q)$ for all $n \in \mathbb{N}$.
\end{dipvsdiqtwo}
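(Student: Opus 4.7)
The plan is to proceed by induction on $n$, following the structural template of Lemma \ref{dipvsdiq}. The base case $n = 2$ is handled directly: $s_2(p) - s_2(q) = (p-q)(2 - p^2 - pq - q^2) \le \frac{5}{4}(p-q) < 17(p-q)$, since $p, q \in [\frac{1}{2}, \phi)$ forces $p^2 + pq + q^2 \ge \frac{3}{4}$.

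For the inductive step, I would apply Lemma \ref{recursived} to both $s_n(p)$ and $s_n(q)$ and subtract, obtaining
\begin{align*}
s_n(p) - s_n(q) &= \bigl[s_{n-1}(p) - s_{n-1}(q)\bigr] c_n(p) + s_{n-1}(q) \bigl[c_n(p) - c_n(q)\bigr] \\
&\hspace{10pt} + \bigl[p^n(1-p) - q^n(1-q)\bigr] + \bigl[(1-q)^n - (1-p)^n\bigr],
\end{align*}
where $c_n(t) := 1 - t^n$ if $s_{n-1}(t) \ge 0$ and $c_n(t) := 1 - t^n - nt^{n-1}(1-t)$ otherwise. The induction hypothesis bounds the first summand by $17 c_n(p)(p-q)$, leaving a slack of $17(1-c_n(p))(p-q)$ that must absorb the remaining three summands. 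I would then split into three cases according to the signs of $s_{n-1}(p)$ and $s_{n-1}(q)$.

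The case where both are non-negative is the easiest: the middle summand equals $s_{n-1}(q)(q^n - p^n) \le 0$ and is therefore free, and the last two summands are bounded using the mean-value expression $g'(t,n) = nt^{n-1}(1-t) + n(1-t)^{n-1} - t^n$ by the slack $17 p^n(p-q)$. The case where both are non-positive uses the enlarged slack $17(p^n + np^{n-1}(1-p))(p-q)$ to absorb $|s_{n-1}(q)||c_n(p) - c_n(q)| \le \frac{1}{8}n(n-1)\phi^n(p-q)$ (which follows from $|b_n'(t)| \le n(n-1)\phi^n$ on $[\frac{1}{2}, \phi]$) together with the analogous $g$-difference.

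The main obstacle is the mixed-sign case $s_{n-1}(q) < 0 \le s_{n-1}(p)$, in which $c_n$ switches formulas between $p$ and $q$, so $c_n(p) - c_n(q) = nq^{n-1}(1-q) - (p^n - q^n)$ need not be small. Here I would exploit the extra information $|s_{n-1}(q)| \le 17(p-q)$ (which follows from the induction hypothesis since $s_{n-1}(p) \ge 0$), making $s_{n-1}(q)\bigl[c_n(p) - c_n(q)\bigr]$ of order $(p-q)^2$ times a decaying-in-$n$ factor and hence absorbable. The remaining challenge is verifying that the constant $17$ suffices uniformly in $n$: the accumulated error across all $n \ge 3$ is a convergent geometric-type series in $\phi$, but the derivative bounds are loose at small $n$, so a direct numerical verification for (say) $n \le 5$ may be required to confirm the induction actually closes at the stated constant.
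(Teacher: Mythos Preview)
Your inductive strategy has a genuine gap in the ``both non-positive'' case, and it is not a matter of sharpening constants: the induction cannot close at \emph{any} fixed constant via this decomposition. In that case $c_n(t) = 1 - b_n(t)$ with $b_n(t) = t^n + nt^{n-1}(1-t)$, so the slack is $17\,b_n(p)(p-q)$, while the troublesome middle term is $|s_{n-1}(q)|\,\bigl(b_n(p)-b_n(q)\bigr)$. Now $b_n'(t) = n(n-1)t^{n-2}(1-t)$, so by the mean value theorem $b_n(p)-b_n(q)$ is of order $n(n-1)p^{n-2}(p-q)$; since $|s_{n-1}(q)|$ is bounded below away from zero (it tends to $|s(q)|$, which is about $0.1$ for $q$ near $\tfrac12$), the middle term is of order $n^2 p^{n-2}(p-q)$. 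But $b_n(p)$ is only of order $np^{n-1}$, so the ratio of middle term to slack grows like $n/p$. Concretely, at $p=q^+=\tfrac12$ your own bound $\tfrac18 n(n-1)\phi^n(p-q)$ already exceeds the slack $17(n+1)(\tfrac12)^n(p-q)$ as soon as $(2\phi)^n \gtrsim 136$, i.e.\ for $n$ in the twenties, and the gap widens exponentially thereafter. Your mixed-sign analysis is fine (there the favourable sign or the $O((p-q)^2)$ estimate saves you), but the both-negative case is the generic one for $q<p\le p_0$ and it persists for all $n$.

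The paper sidesteps this by \emph{not} inducting. It telescopes $s_n(p)-s_n(q)$ into $\sum_{k\le n}$ of increment differences via equation~(\ref{recgeneral}), and then uses the monotonicity $v_{k-1,p}>v_{k-1,q}$ (Lemma~\ref{vnmonotone}) together with $s_{k-1}(p)\ge s_{k-1}(q)$ whenever $s_{k-1}(p)<0$ (Lemma~\ref{dipvsdiq}) to replace $v_{k-1,q}$ by $v_{k-1,p}$ inside the $q$-sum. After this replacement the state variables match and cancel, leaving only explicit geometric-type sums in $p$ and $q$ that evaluate to $\frac{2}{(1-p)(1-q)}+\frac{1}{pq}-1 < 17$ on $[\tfrac12,\phi)$. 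The point is that the cross-term you are trying to absorb one step at a time never appears in the telescoped version, because the replacement kills it globally rather than bounding it locally.
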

\begin{proof}
Let us use the definition of $s_n(p)$ and start calculating.

\begin{align*}
s_n(p) - s_n(q) &= q - p + v_{n,p} - v_{n,q} \\
&= q - p + \sum_{k=1}^n (v_{k,p} - v_{k-1,p} - 1) - \sum_{k=1}^n (v_{k,q} - v_{k,q-1} - 1) \\
&= q - p + \sum_{k=1}^n p^k(k - 1 - v_{k-1,p}) + kp^{k-1}(1-p)\max(0, k-2+p-v_{k-1,p}) - (1-p)^k \\
&- \sum_{k=1}^n q^k(k - 1 - v_{k-1,q}) + kq^{k-1}(1-q)\max(0, k-2+q-v_{k-1,q}) - (1-q)^k \\
&< q - p + \sum_{k=1}^n p^k(k - 1 - v_{k-1,p}) + kp^{k-1}(1-p)\max(0, k-2+p-v_{k-1,p}) - (1-p)^k \\
&- \sum_{k=1}^n q^k(k - 1 - v_{k-1,p}) + kq^{k-1}(1-q)\max(0, k-2+p-v_{k-1,p}) - (1-q)^k \\
&< q - p + \sum_{k=1}^n p^k - q^k + \sum_{k=1}^n k\big(p^{k-1}(1-p) - q^{k-1}(1-q)\big) + \sum_{k=1}^n (1-q)^k - (1-p)^k \\
&< q - p + \sum_{k=1}^{\infty} p^k - q^k + \sum_{k=1}^{\infty} k\big(p^{k-1}(1-p) - q^{k-1}(1-q)\big) + \sum_{k=1}^{\infty} (1-q)^k - (1-p)^k \\
&= q - p + \frac{1}{1-p} - \frac{1}{1-q} + \frac{1}{1-p} - \frac{1}{1-q} + \frac{1}{q} - \frac{1}{p} \\
&= q - p + \frac{2(p-q)}{(1-p)(1-q)} + \frac{p-q}{pq} \\
&< 17(p-q) \qedhere
\end{align*}
\end{proof}

It is interesting to note that Lemmas \ref{dipvsdiq} and \ref{dipvsdiqtwo} together are already sufficient to prove the estimate $n(p) = \frac{\log(p - p_0)}{\log(p_0)} + O(1)$. Moreover, in what follows we will actually need a lower bound of this form. So let us quickly state and prove it. 

\begin{nplower} \label{nplower}
Let $0 < \delta < \frac{1}{21}$ be small enough so that $\frac{1}{2}p^{n(p)} < s(p)$ for all $p \in (p_0, p_0 + \delta) \subseteq (p_0, 0.6)$. Then $n(p) > \frac{\log(p - p_0)}{\log(p_0)} - 7$ for all these values of $p$.
\end{nplower}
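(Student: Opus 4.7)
The plan is to use Lemma \ref{dipvsdiqtwo} to produce an explicit Lipschitz-type upper bound on $s(p)$ near $p_0$, pair it with the standing hypothesis $\tfrac{1}{2}p^{n(p)} < s(p)$, and then take logarithms to extract a lower bound on $n(p)$.

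Concretely, I would first apply Lemma \ref{dipvsdiqtwo} with $q = p_0$, obtaining $s_n(p) < s_n(p_0) + 17(p - p_0)$ for every $n$. Passing to the limit via the uniform convergence supplied by Corollary \ref{cont} and using $s(p_0) = 0$ from Proposition \ref{cpincrease} gives $s(p) \le 17(p - p_0)$. Feeding this into the hypothesis yields $p^{n(p)} < 34(p - p_0)$, and taking logarithms and dividing through by the negative quantity $\log p$ flips the inequality to
\[ n(p) > \frac{\log(p - p_0)}{\log p} + \frac{\log 34}{\log p}. \]

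What remains is to show that this right-hand side beats $\frac{\log(p - p_0)}{\log p_0} - 7$ throughout the window $p \in (p_0, p_0 + \delta) \subseteq (p_0, 0.6)$. For the first summand, both $\log(p - p_0)$ and $\log p$ are negative and $|\log p| < |\log p_0|$, so $\frac{\log(p - p_0)}{\log p} > \frac{\log(p - p_0)}{\log p_0}$. For the second summand, the bound $p < 0.6$ together with the numerical check $0.6^7 < \tfrac{1}{34}$ gives $-\log p > \tfrac{\log 34}{7}$, hence $\frac{\log 34}{\log p} > -7$. Adding these two bounds produces the claim.

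There is no real obstacle here; all the heavy lifting has already been performed in Lemma \ref{dipvsdiqtwo}, and what is left is a couple of elementary log inequalities that happen to fit comfortably inside the interval where $p_0$ and $0.6$ sit.
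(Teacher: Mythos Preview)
Your argument is correct and follows the same route as the paper: bound $s(p)$ above by $17(p-p_0)$ via Lemma~\ref{dipvsdiqtwo}, combine with the hypothesis to get $p^{n(p)} < 34(p-p_0)$, take logarithms, and then use $p_0 < p < 0.6$ together with $0.6^7 < 1/34$ to pass from $\log p$ to $\log p_0$ and absorb the $\log 34$ term into the constant~$7$. The paper's proof is just a terser version of exactly what you wrote.
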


\begin{proof}
Applying Lemma \ref{dipvsdiqtwo} we get $\frac{1}{2}p^{n(p)} < s(p) < 17(p - p_0)$. Doubling both sides and taking logarithms we get $n(p) > \frac{\log(p - p_0)}{\log(p)} + \frac{\log(34)}{\log(p)} > \frac{\log(p - p_0)}{\log(p_0)} - 7$.
\end{proof}

In particular we see that $p_0^{n(p)+1} < p_0^{-6}(p-p_0) < 37(p - p_0)$, which we shall use near the end of the upcoming proof. \\
 
Now, it is important to realize that $s_n(p_0)$ is by equation (\ref{recgeneral}) a polynomial $P(p_0)$ in $p_0$, and if $n(p) > n$ for all $p \in (p_0, p_0 + \delta)$, then $s_n(p) = P(p)$ is the same polynomial for all $p \in [\frac{1}{2}, p_0 + \delta)$. This implies that the derivative $s'_{n}(p_0) = P'(p_0)$ exists and can therefore be arbitrarily well approximated by $\frac{s_{n}(p) - s_{n}(p_0)}{p - p_0}$. \\

\begin{proof}[Proof of Proposition \ref{differentiable}]
Let $\epsilon > 0$ be given, choose $n$ large enough so that \mbox{$10^4n^2 \cdot 0.6^n$} $< \frac{1}{2}\epsilon$ and let $p$ for the rest of this proof be different from $p_0$. Furthermore, choose $\delta < \frac{1}{21}$ positive and small enough such that for all $p \in (p_0, p_0 + \delta)$ we have $n(p) > n$ and $\frac{1}{2}p^{n(p)} < s(p)$, and for all $p \in (p_0 - \delta, p_0 + \delta) \subseteq (0.5, 0.6)$ we have $\left|\frac{s_n(p) - s_n(p_0)}{p - p_0} - s'_n(p_0)\right| < \frac{1}{2}\epsilon$. Then we claim $\left|\frac{s(p) - s(p_0)}{p - p_0} - s'_n(p_0)\right| < \epsilon$ for all $p \in (p_0 - \delta, p_0 + \delta)$. This would prove that both $\displaystyle \lim_{n \rightarrow \infty} s'_n(p_0)$ and $\displaystyle \lim_{p \rightarrow p_0} \frac{s(p) - s(p_0)}{p - p_0}$ exist, and that they are equal. And this limit may then be appropriately denoted by $s'(p_0)$. Let us first apply the triangle inequality. \\

\begin{align*}
\left|\frac{s(p) - s(p_0)}{p - {p_0}} - s'_{n}(p_0)\right| &\le  \left|\frac{s(p) - s(p_0)}{p - {p_0}} - \frac{s_{n}(p) - s_{n}(p_0)}{p - p_0} \right| + \left|\frac{s_{n}(p) - s_{n}(p_0)}{p - {p_0}} - s'_{n}(p_0)\right| \\
&<  \left|\frac{s(p) - s(p_0)}{p - {p_0}} - \frac{s_{n}(p) - s_{n}(p_0)}{p - {p_0}} \right| + \frac{1}{2}\epsilon \\
&= \left|\frac{\big(s(p) - s_{n}(p)\big) - \big(s(p_0) - s_n(p_0)\big)}{p - {p_0}} \right| + \frac{1}{2}\epsilon
\end{align*}

Since $10^4n^2 \cdot 0.6^n < \frac{1}{2}\epsilon$, it is sufficient to show $\left|\big(s(p) - s_n(p)\big) - \big(s(p_0) - s_{n}(p_0)\big)\right| < 10^4n^2 \cdot 0.6^n \cdot |p - p_0|$. And this is exactly what we will do. \\

We will write $s(p) - s_n(p)$ as a combination of three sums $-\Sigma_{1,p} - \Sigma_{2,p} + \Sigma_{3,p}$ so that $\left|\big(s(p) - s_n(p)\big) - \big(s(p_0) - s_{n}(p_0)\big)\right| \le \displaystyle \sum_{i=1}^3|\Sigma_{i,p} - \Sigma_{i,p_0}|$, and we will handle the three terms $|\Sigma_{i,p} - \Sigma_{i,p_0}|$ separately. To get those three sums, write $s(p) - s_n(p)$ as an infinite telescoping series and apply Lemma \ref{recursived}.

\begin{align*}
s(p) - s_n(p) &= \sum_{k = n+1}^{\infty} s_{k}(p) - s_{k-1}(p) \\
&= -\sum_{k = n+1}^{\infty} s_{k-1}(p)p^k - \sum_{k = n+1}^{\infty} \min\big(0, s_{k-1}(p)\big)kp^{k-1}(1-p) + \sum_{k = n+1}^{\infty} p^k(1-p) - (1-p)^k \\
&:= -\Sigma_{1,p} - \Sigma_{2,p} + \Sigma_{3,p}
\end{align*}

To analyze the three terms $|\Sigma_{i,p} - \Sigma_{i,p_0}|$ we will come across differences like $p^n - p_0^n$ a lot. We will bound such differences by factorizing them and applying $\max(p, p_0) < 0.6$. 

\begin{align*}
|p^n - p_0^n| &= |(p - p_0)(p^{n-1} + p_0p^{n-2} + \ldots + p_0^{n-2}p + p_0^{n-1})| \\
&< |(p - p_0) (n \cdot \max(p,p_0)^{n-1})| \\
&< 2n \cdot 0.6^n \cdot |p - p_0|
\end{align*}

With this in mind, let us look at $\left|\Sigma_{3,p} - \Sigma_{3,p_0}\right|$ first, as it provides a good warm-up for the rest.

\begin{align*}
\left|\Sigma_{3,p} - \Sigma_{3,p_0}\right| &=\left|\sum_{k = n+1}^{\infty} p^k(1-p) - (1-p)^k - \sum_{k = n+1}^{\infty} p_0^k(1-p_0) - (1-p_0)^k\right| \\
&= \left|p^{n+1} - \frac{(1-p)^{n+1}}{p} - p_0^{n+1} + \frac{(1-p_0)^{n+1}}{p_0} \right| \\
&\le \left|p^{n+1} - p_0^{n+1}\right| + \left|\frac{(p - p_0)(1-p_0)^{n+1}}{pp_0} \right| + \left|\frac{(1-p_0)^{n+1} - (1-p)^{n+1}}{p} \right| \\
&< 2n \cdot 0.6^n \cdot |p - p_0| + 4 \cdot 0.6^n \cdot |p - p_0|  + 4n \cdot 0.6^n \cdot |p - p_0| \\
&\le 10n \cdot 0.6^n \cdot |p - p_0|
\end{align*}

The term $\left|\Sigma_{1,p} - \Sigma_{1,p_0}\right|$ takes only slightly longer to analyze, and for this we will use the easy to check bound $|s_{k-1}(p_0)| < 1$ and the upper bound $|s_{k-1}(p) - s_{k-1}(p_0)| < 17|p - p_0|$ from Lemma \ref{dipvsdiqtwo}.

\begin{align*}
\left|\Sigma_{1,p} - \Sigma_{1,p_0} \right| &= \left|\sum_{k = n+1}^{\infty} s_{k-1}(p)p^k - \sum_{k = n+1}^{\infty} s_{k-1}(p_0)p_0^k\right| \\
&= \left|\sum_{k = n+1}^{\infty} \big(s_{k-1}(p) - s_{k-1}(p_0)\big)p^k + \sum_{k = n+1}^{\infty} s_{k-1}(p_0)(p^k - p_0^k) \right| \\
&< \left|\sum_{k = n+1}^{\infty} 17p^k(p-p_0) \right| + \left| \sum_{k = n+1}^{\infty} (p^k - p_0^k) \right|\\
&= \left|\frac{17(p-p_0)p^{n+1}}{1-p}\right| + \left|\frac{p^{n+1}}{1-p} - \frac{p_0^{n+1}}{1-p_0} \right|\\
&\le \left|\frac{17(p-p_0)p^{n+1}}{1-p}\right| + \left|\frac{p^{n+1} - p_0^{n+1}}{(1-p)(1-p_0)}\right| + \left|\frac{pp_0(p_0^{n} - p^{n})}{(1-p)(1-p_0)}\right| \\
&< 43\cdot 0.6^n \cdot |p - p_0| + 14n \cdot 0.6^n \cdot |p - p_0| + 14n \cdot 0.6^n \cdot |p - p_0| \\
&\le 71n \cdot 0.6^n \cdot |p - p_0|
\end{align*}

Finally, we are left with analyzing $\left|\Sigma_{2,p} - \Sigma_{2,p_0}\right|$. This is the trickiest one as it contains $\min\big(0, s_{k-1}(p)\big)$ which is either equal to $s_{k-1}(p)$ for all $k$ if $p < p_0$, but will, if $p > p_0$, be equal to $0$ for all $k > n(p)$. We therefore need to handle these two different cases separately. But let us first split up $\left|\Sigma_{2,p} - \Sigma_{2,p_0}\right|$ into two parts.

\begin{align*}
\left|\Sigma_{2,p} - \Sigma_{2,p_0}\right| &= \left|\sum_{k = n+1}^{\infty} \min\big(0, s_{k-1}(p)\big)kp^{k-1}(1-p) - \sum_{k = n+1}^{\infty} s_{k-1}(p_0)kp_0^{k-1}(1 - p_0) \right|\\
&\le \left|\sum_{k = n+1}^{\infty} \Big(\min\big(0, s_{k-1}(p)\big) - s_{k-1}(p_0)\Big)kp^{k-1}(1-p)\right| \\
&+ \left|\sum_{k = n+1}^{\infty} s_{k-1}(p_0)k\big(p^{k-1}(1-p) - p_0^{k-1}(1-p_0)\big) \right| 
\end{align*}

As for the second sum here, we can use the bound $|s_{k-1}(p_0)| < 1$ and deal with it regardless of whether $p$ is smaller or larger than $p_0$.

\begin{align*}
\left|\sum_{k = n+1}^{\infty} k\big(p^{k-1}(1-p) - p_0^{k-1}(1-p_0)\big) \right| &= \left|(n+1)p^n + \frac{p^{n+1}}{1-p} - (n+1)p_0^n - \frac{p_0^{n+1}}{1-p_0}\right| \\
&\le \left|(n+1)(p^n - p_0^n)\right| + \left|\frac{p^{n+1} - p_0^{n+1}}{(1-p)(1-p_0)} \right| + \left|\frac{pp_0(p_0^n - p^n)}{(1-p)(1-p_0)} \right| \\
&< 4n^2 \cdot 0.6^n \cdot \left|p - p_0\right| + 14n \cdot 0.6^n \cdot \left|p - p_0\right| + 14n \cdot 0.6^n \cdot \left|p - p_0\right| \\
&\le 32n^2 \cdot 0.6^n \cdot \left|p - p_0\right|
\end{align*}

The first part of $\left|\Sigma_{2,p} - \Sigma_{2,p_0}\right|$ does depend on the value of $\min\big(0, s_{k-1}(p)\big)$, so first assume $p < p_0$. In that case we can	 apply Lemma \ref{dipvsdiqtwo} again.

\begin{align*}
\left|\sum_{k = n+1}^{\infty} \Big(\min\big(0, s_{k-1}(p)\big) - s_{k-1}(p_0)\Big)kp^{k-1}(1-p) \right| &= \left|\sum_{k = n+1}^{\infty} \big(s_{k-1}(p) - s_{k-1}(p_0)\big)kp^{k-1}(1-p) \right| \\
&\le \sum_{k = n+1}^{\infty} 17|p - p_0|kp^{k-1}(1-p) \\
&= 17\left((n+1)p^n + \frac{p^{n+1}}{1-p}\right)|p - p_0| \\
&\le 85n \cdot 0.6^n \cdot \left|p - p_0\right| 
\end{align*}

If, on the other hand, $p_0 > p$, then $s_{k-1}(p) \ge 0$ for $k > n(p)$. For such $k$ we then need to apply $|s_{k-1}(p_0)| < 8kp_0^k$ (which follows from Lemma \ref{expo}) and $p_0^k \le p_0^{n(p)+1} \le 37(p - p_0)$ which, as mentioned before we started this whole proof, follows from Corollary \ref{nplower}. We furthermore need one lemma that one can prove by differentiating the equality from Lemma \ref{expo}.

\begin{expok}
For all $p \in (0.5, 0.6)$ and all $n \in \mathbb{N}$ we have the following equality and subsequent inequality:
\begin{align*}
\sum_{k = n+1}^{\infty} k^2p^{k-1}(1-p) &= n(n+1)p^n + \frac{np^{n+1}}{1-p} + \frac{(n+1)p^n}{1-p} + \frac{2p^{n+1}}{(1-p)^2} \\
&< 24n^2 \cdot 0.6^n
\end{align*}
\end{expok}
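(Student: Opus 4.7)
The plan is to derive the equality by differentiating the identity $\sum_{k = n+1}^{\infty} kp^k = p^{n+1}\big(\frac{n}{1-p} + \frac{1}{(1-p)^2}\big)$ (established earlier in the paper) with respect to $p$, and then to obtain the inequality by a crude term-by-term bound that exploits $p < 0.6$.

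Rewriting the right-hand side of that identity as $\frac{np^{n+1}}{1-p} + \frac{p^{n+1}}{(1-p)^2}$ and differentiating termwise (justified by uniform convergence of the power series on compact subsets of $[0,1)$), the left-hand side becomes $\sum_{k=n+1}^{\infty} k^2 p^{k-1}$, while a routine application of the product and quotient rules on the right yields $\frac{n(n+1)p^n}{1-p} + \frac{(n+1)p^n}{(1-p)^2} + \frac{np^{n+1}}{(1-p)^2} + \frac{2p^{n+1}}{(1-p)^3}$. Multiplying both sides by $(1-p)$ then produces exactly the four-term closed form claimed in the lemma.

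For the inequality, I would use $p \in (0.5, 0.6)$ so that $1 - p > 0.4$ and $p^k < 0.6^k$, and bound each of the four terms separately: $n(n+1)p^n \le 2n^2 \cdot 0.6^n$; $\frac{np^{n+1}}{1-p} < \frac{0.6}{0.4}\, n \cdot 0.6^n = 1.5 n \cdot 0.6^n$; $\frac{(n+1)p^n}{1-p} < 2.5(n+1) \cdot 0.6^n \le 5 n \cdot 0.6^n$ for $n \ge 1$; and $\frac{2p^{n+1}}{(1-p)^2} < \frac{1.2}{0.16} \cdot 0.6^n < 8 \cdot 0.6^n$. Summing these and using $n \ge 1$ to fold the linear and constant parts into $2n^2$ gives a total comfortably below $24n^2 \cdot 0.6^n$.

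The only real obstacle is bookkeeping, namely keeping the four summands arising from the differentiation straight and ensuring no constant is dropped while bounding them. Conceptually the lemma is a one-derivative repackaging of the already-established closed form for $\sum k p^k$, and the constant $24$ in the target inequality is generous enough that none of the intermediate bounds need to be sharpened.
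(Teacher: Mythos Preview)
Your proposal is correct and follows exactly the approach the paper indicates: the paper simply remarks that the lemma ``can be proved by differentiating the equality'' $\sum_{k=n+1}^{\infty} kp^k = p^{n+1}\big(\tfrac{n}{1-p} + \tfrac{1}{(1-p)^2}\big)$ and leaves the details to the reader, which is precisely what you have filled in. Your termwise bounds for the inequality are fine for all $n \ge 1$, which is the range actually needed.
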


Let us use this inequality to bound the remaining part of $\left|\Sigma_{2,p} - \Sigma_{2,p_0}\right|$.

\begin{align*}
&\hspace{12pt} \left|\sum_{k = n+1}^{\infty} \Big(\min\big(0, s_{k-1}(p)\big) - s_{k-1}(p_0)\Big)kp^{k-1}(1-p) \right| \\
&\le \left|\sum_{k = n+1}^{n(p)} \big(s_{k-1}(p) - s_{k-1}(p_0)\big)kp^{k-1}(1-p)\right| + \left|\sum_{k = n(p)+1}^{\infty} s_{k-1}(p_0)kp^{k-1}(1-p)\right| \\
&< \left|\sum_{k = n+1}^{n(p)} 17kp^{k-1}(1-p)(p-p_0)\right| + \left|\sum_{k = n(p)+1}^{\infty} (8kp_0^k)kp^{k-1}(1-p)\right| \\
&< \left|\sum_{k = n+1}^{n(p)} 17kp^{k-1}(1-p)(p-p_0)\right| + \left|\sum_{k = n(p)+1}^{\infty} 300 k^2p^{k-1}(1-p)(p-p_0)\right| \\
&< \left|\sum_{k = n+1}^{\infty} 300k^2p^{k-1}(1-p)(p - p_0) \right| \\
&< 7200n^2 \cdot 0.6^n \cdot \left|p - p_0\right|
\end{align*}

 By combining all estimates, we can finally finish the calculation from the start of this proof.

\begin{align*}
\left|\frac{s(p) - s(p_0)}{p - p_0} - s'_{n}(p_0)\right| &< \left|\frac{\big(s(p) - s_n(p)\big) - \big(s(p_0) - s_n(p_0)\big)}{p - p_0} \right| + \frac{1}{2}\epsilon \\
&\le \sum_{i=1}^3 \left| \frac{\Sigma_{i,p} - \Sigma_{i,p_0}}{p - p_0}\right| + \frac{1}{2}\epsilon \\
&< (71n + 32n^2 + \max(7200n^2, 85n) + 10n) \cdot 0.6^n + \frac{1}{2}\epsilon \\
&< 10^4n^2 \cdot 0.6^n +  \frac{1}{2}\epsilon \\
&< \epsilon \qedhere
\end{align*}
\end{proof}

\section{When the probability of landing heads is smaller than $\frac{1}{2}$}
When $0 < p < \frac{1}{2}$ we are unfortunately not able to prove anything resembling Theorem \ref{Main}. This is due to the fact that there are infinitely many $n$ for which the inequality $v_{n,p} \ge v_{n-1,p} + 1$ from Proposition \ref{vnincrease} does not hold.

\begin{vnnotincrease} \label{notincrease}
For every $p \in (0, \frac{1}{2})$, there are infinitely many $n$ such that $v_{n,p} < v_{n-1,p} + 1$.
\end{vnnotincrease}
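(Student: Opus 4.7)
The plan is to argue by contradiction: I assume there exists $N$ with $v_{n,p} \ge v_{n-1,p} + 1$ for all $n \ge N$ and show that this conflicts with $p < \frac{1}{2}$. Setting $d_n := n - v_{n,p}$, the hypothesis becomes $d_n \le d_{n-1}$ for all $n \ge N$, and the goal is to prove that $d_n > d_{n-1}$ for infinitely many $n$.

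First I would establish the uniform bound $0 \le d_n \le \frac{1-p}{p}$. The strategy ``always set aside exactly one coin per round, preferring a heads whenever at least one appears'' yields an expected contribution of $1 - (1-p)^k$ from the round with $k$ coins, hence a total expected payoff of $n - \frac{(1-p)(1-(1-p)^n)}{p}$, so $d_n \le \frac{1-p}{p}$ for every $n$.

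Next, under the contradiction hypothesis, the iteration used to derive equation (\ref{recgeneral}) from (\ref{recform}) only gives $v_{n-1,p} + 1 \ge v_{n-i,p} + i$ for $i \le n - N + 1$, since it relies on $v_{k,p} \ge v_{k-1,p} + 1$ for $k \in \{n-i+1, \ldots, n-1\}$. Consequently the simplification ``$\max = v_{n-1,p} + 1$'' is valid for every $j \le n - N + 1$, and only the terms with $j \in \{n - N + 2, \ldots, n - 2\}$ can produce a correction. Each such correction is at most $d_{n-1}$, since $v_{n-i,p} + i \le n$ while $v_{n-1,p} + 1 = n - d_{n-1}$, and the binomial weights of these outcomes sum to $\sum_{\ell = 2}^{N-2} \binom{n}{\ell} p^{n-\ell}(1-p)^{\ell} = O(n^{N-2} p^n)$, with implicit constants depending only on $N$ and $p$. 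Thus equation (\ref{recgeneral}) holds up to an error $E_n$ with $|E_n| = O(n^{N-2} p^n)$, and rewriting it in terms of $d_n$ yields
\begin{equation*}
d_{n-1} - d_n = p^n d_{n-1} + np^{n-1}(1-p)\max\bigl(0,\, d_{n-1} - 1 + p\bigr) - (1-p)^n + E_n.
\end{equation*}

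Finally I would invoke the assumption $p < \frac{1}{2}$. Since $\frac{p}{1-p} < 1$, the quantities $p^n$, $np^{n-1}$, and $n^{N-2}p^n$ are all $o\bigl((1-p)^n\bigr)$; together with the boundedness of $d_{n-1}$, this forces every term on the right-hand side except $-(1-p)^n$ to be $o\bigl((1-p)^n\bigr)$. Consequently $d_{n-1} - d_n = -(1-p)^n\bigl(1+o(1)\bigr) < 0$ for all sufficiently large $n$, contradicting $d_n \le d_{n-1}$. The main bookkeeping hurdle will be controlling $E_n$, but this reduces to a crude tail estimate for $\mathrm{Bin}(n,p)$ near its maximum; the substantive point where $p < \frac{1}{2}$ really enters is that the all-tails contribution $(1-p)^n$ coming from the $j = 0$ term of (\ref{recform}) then strictly dominates every $p^n$-scale term.
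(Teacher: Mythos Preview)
Your argument is correct and follows the same idea as the paper's: both argue by contradiction, isolate the finitely many indices $j$ near $n$ where the reduction $\max_i(v_{n-i,p}+i)=v_{n-1,p}+1$ may fail, and then use that for $p<\tfrac12$ the all-tails term $(1-p)^n$ eventually dominates every $O(n^{C}p^{n})$ correction coming from those indices. One small bookkeeping point: the case $j=n-1$ can also contribute to your error $E_n$ (under your hypothesis the maximum there need not be attained at $i=1$ or $i=n-1$, so the $np^{n-1}(1-p)\max(0,\,d_{n-1}-1+p)$ term of~(\ref{recgeneral}) may undercount), but this adds only an $O(np^{n-1})$ term that you already list as $o\bigl((1-p)^n\bigr)$, so the conclusion is unaffected.
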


\begin{proof}
Let $n_0$ be a given arbitrary positive integer. We will prove that there exists an $n_1 > n_0$ such that $v_{n_1,p} < v_{n_1-1,p} + 1$. Let $n$ be large enough such that the probability of getting at least $n - n_0$ heads, denoted by $P$, is less than $\frac{(1-p)^n}{n}$. Such an $n$ exists precisely when $p < \frac{1}{2}$. We may further assume that the inequality $v_{k,p} \ge v_{k-1,p} + 1$ is satisfied for all $k$ with $n_0 < k \le n$, or otherwise we are done immediately. These inequalities imply that for $j < n - n_0$ we get $\max_{1 \le i \le j} (v_{n-i,p} + i) = v_{n-1,p} + 1$. Now we look at equation (\ref{recgeneral}) and use the fact that $v_{n-i,p} + i \le n$ for all $i \ge n - n_0$. For ease of notation, define $\Sigma_1$ to be the part of the sum where $j$ goes from $1$ to $n - n_0 - 1$ and let $\Sigma_2$ be the part of the sum where $j$ goes from $n - n_0$ to $n-1$.

\begin{align*}
v_{n,p} &= np^n + v_{n-1,p} (1-p)^n + \sum_{j=1}^{n-1} \binom{n}{j}p^j(1-p)^{n-j} \left(\max_{1 \le i \le j} (v_{n-i,p} + i)\right) \\
&= np^n + v_{n-1,p} (1-p)^n + \Sigma_1 + \Sigma_2 \\
&\le v_{n-1,p} (1-p)^n + (1 - P - (1-p)^n)(v_{n-1,p} + 1) + nP \\
&< v_{n-1,p} (1-p)^n + (1 - (1-p)^n)(v_{n-1,p} + 1) + (1-p)^n \\
&= v_{n-1,p} + 1 \qedhere
\end{align*}

\end{proof}

All is not lost however. For example, we still get that $s_n(p)$ converges for all $p > 0$.

\begin{cpsmallerthanhalf}
For every $p \in (0, 1)$ there exists a $s(p) \in [1 - p - \frac{1}{p}, 1-p)$ such that $s_n(p)$ converges to $s(p)$.
\end{cpsmallerthanhalf}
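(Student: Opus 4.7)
The plan is to handle the three components of the claim separately: convergence of $s_n(p)$, the closed left endpoint $s(p) \ge 1-p-\tfrac{1}{p}$, and the strict upper bound $s(p) < 1-p$, with only the last being non-routine.

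For convergence I would take the trivial choice $i = 1$ in the inner maximum of equation~(\ref{recform}), giving $\max_{1\le i\le j}(v_{n-i,p}+i) \ge v_{n-1,p}+1$, which after substitution and simplification yields $v_{n,p} \ge v_{n-1,p} + 1 - (1-p)^n$ for every $n \ge 1$. Writing $T_n := v_{n,p}-n$ this reads $T_n - T_{n-1} \ge -(1-p)^n$, so the perturbed sequence $U_n := T_n + \sum_{k=1}^n (1-p)^k$ is non-decreasing. Since $v_{n,p} \le n$ forces $T_n \le 0$, one has $U_n \le \tfrac{1-p}{p}$; a non-decreasing sequence bounded above converges, and subtracting the convergent geometric series then forces $T_n$, and hence $s_n(p) = T_n + 1 - p$, to converge to a limit which we call $s(p)$.

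For the left endpoint, the concrete strategy of setting aside exactly one coin per round (preferring a heads when any shows) earns $1-(1-p)^k$ in expectation during the round with $k$ coins remaining. Summing over rounds gives $v_{n,p} \ge n - \sum_{k=1}^n (1-p)^k > n - \tfrac{1-p}{p}$, hence $s(p) \ge 1-p - \tfrac{1-p}{p} = -(1-p)^2/p$, which a short arithmetic check shows is strictly larger than $1-p-\tfrac{1}{p}$ (the difference equals $1$).

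The strict upper bound $s(p) < 1-p$ is already Corollary~\ref{cpsmallerthanpminusone} for $p \ge \tfrac{1}{2}$, so the remaining task is $p < \tfrac{1}{2}$. Here I would prove by induction on $n$ that $v_{n,p} \le n - (1-p)$. The base $n = 1$ is the equality $v_{1,p} = p$. For the inductive step, the hypothesis forces $v_{n-i,p}+i \le n-(1-p)$ for each $1\le i \le n-1$, so every inner maximum $M_{n,j}$ in equation~(\ref{recform}) is bounded by $n-(1-p)$ for $1 \le j \le n-1$; substituting and collecting terms produces $v_{n,p} \le n-(1-p) + (1-p)p^n - (1-p)^n$, and the inequality $p \le 1-p$ in this regime gives $p^n \le (1-p)^{n-1}$, closing the induction. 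Consequently $T_n \le -(1-p)$ for every $n \ge 1$, so $s(p) \le 0 < 1-p$. The main obstacle is precisely this last step: the convergence argument on its own only delivers $s(p) \le 1-p$, and the inductive bound $v_{n,p} \le n-(1-p)$ is what turns the asymmetry $p \le 1-p$ into a definite gap below $1-p$.
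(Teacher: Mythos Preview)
Your proof is correct. The convergence argument is a genuine alternative to the paper's: you make the auxiliary sequence $U_n = (v_{n,p}-n) + \sum_{k=1}^n (1-p)^k$ monotone and bounded, whereas the paper first takes the $\limsup$ of $s_n(p)$ and then argues separately, via the same telescoping bound $v_{n,p}\ge v_{n-1,p}+1-(1-p)^n$, that every later term stays within $\epsilon$ of the $\limsup$. Your route is a bit cleaner and avoids the two-step $\limsup$-then-limit structure.

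There is also a real difference on the strict upper bound $s(p)<1-p$. The paper's proof of this lemma only records the non-strict bound $s_n(p)\le 1-p$; the strict inequality for $p\ge \tfrac12$ is Corollary~\ref{cpsmallerthanpminusone}, but for $p<\tfrac12$ the paper never revisits it. Your inductive bound $v_{n,p}\le n-(1-p)$ for $p\le\tfrac12$ (which reduces to $(1-p)p^n\le (1-p)^n$ after substituting into equation~(\ref{recform})) is an elementary argument that actually closes this gap and yields the stronger conclusion $s(p)\le 0$ throughout $(0,\tfrac12]$. So your treatment is in fact more complete on this point than the paper's own proof.
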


\begin{proof}
By equation (\ref{recform}) we always have $v_{n,p} \ge v_{n-1,p} + 1 - (1-p)^n$. This implies that we get the following bound:

\begin{align*}
v_{n,p} &= \sum_{i=1}^n v_{i,p} - v_{i-1,p} \\ 
&\ge n - \sum_{i=1}^n (1-p)^i \\
&> n - \sum_{i=1}^{\infty} (1-p)^i \\
&= n - \frac{1}{p}
\end{align*}

We conclude that for all $n \in \mathbb{N}$, $s_n(p)$ is bounded below by $1 - p - \frac{1}{p}$ and bounded above by $1-p$. This implies that $\displaystyle \limsup_{n \rightarrow \infty} s_n(p)$ exists, and we will call this limit superior $s(p)$. We will show that this limit superior is actually a limit. For this it is sufficient that for every $\epsilon > 0$ there exists an $N$ such that for all $n \ge N$ we get $s_n(p) > s(p) - \epsilon$. \\

Let $\epsilon$ be given and choose $n_1$ large enough such that $\frac{(1-p)^{n_1}}{p} < \frac{\epsilon}{2}$. Let $n_2 \ge n_1$ be an integer which is close to the limit superior in the sense that $s_{n_2}(p) > s(p) - \frac{\epsilon}{2}$. Then for all $n \ge n_2$ we get:

\begin{align*}
s_n(p) &= s_{n_2}(p) + \sum_{i=n_2 + 1}^n (v_{i+1,p} - v_{i,p} - 1) \\
&\ge s_{n_2}(p) - \sum_{i=n_2 + 1}^n (1-p)^i \\
&> s_{n_2}(p) - \sum_{i=n_2 + 1}^{\infty} (1-p)^i \\
&= s_{n_2}(p) - \frac{(1-p)^{n_2+1}}{p} \\
&> s(p) - \epsilon \qedhere
\end{align*}
\end{proof}

\newpage
\section{Concluding thoughts and remarks}
Peter Pfaffelhuber remarked in personal communication that equation (\ref{recformtwo}) implies by induction the following non-recursive formula for $v_{n,p}$, which holds for $p \ge \phi$:

\begin{equation*}
v_{n,p} = n - \sum_{k=1}^n \left[(1-p)^k \prod_{j=k+1}^n (1-p^j) \right]
\end{equation*}

\begin{proof}
This equation can be checked for $n = 0$, so let us assume it is true for $m = n-1$. We will then prove it to be true for $m = n$ as well.

\begin{align*}
v_{n,p} &= v_{n-1,p} + 1 + p^n(n - 1 - v_{n-1,p}) - (1-p)^n \\
&= n - (1 - p^n)\sum_{k=1}^{n-1} \left[(1-p)^k \prod_{j=k+1}^{n-1} (1-p^j)\right] - (1-p)^n \\
&= n - \sum_{k=1}^{n-1} \left[(1-p)^k \prod_{j=k+1}^{n} (1-p^j)\right] - (1-p)^n \\
&= n - \sum_{k=1}^n \left[(1-p)^k \prod_{j=k+1}^n (1-p^j) \right]
\end{align*}

Here, the last equality follows since the product is empty for $k = n$.
\end{proof}

Peter furthermore realized that, if the goal of the game instead is to optimize the probability of ending up with all heads (as opposed to maximizing the expected number of heads), it is for $p > \frac{1}{2}$ always optimal to set aside only one coin, unless all coins show heads. With $w_{n,p}$ the probability of getting all heads with optimal play, this follows from the equalities $w_{n-1,p} < w_{n,p} < \dfrac{p^{n+1}}{(1-p)^{n+1} + p^{n+1}}$. These inequalities can be proven via induction in much the same way as the proof of Lemma \ref{boundsforphi}, by applying the following recursive formula:

\begin{equation*}
w_{n,p} = p^n + \sum_{j=1}^{n-1} \binom{n}{j}p^j(1-p)^{n-j} \left(\max_{1 \le i \le j} w_{n-i,p} \right)
\end{equation*}

Returning to the original game, it is possible to state Proposition \ref{differentiable} a bit more generally, as for most $p \in (\frac{1}{2}, 1]$ we can prove differentiability of $s(p)$ similarly to how we proved it for $p_0$. It is interesting to note however, that it does not work for all $p > \frac{1}{2}$. Recall that we used the fact that $s_n(p_0)$ can be written as a polynomial $P(p_0)$ and that there exists an interval around $p_0$ such that for all $q$ in this interval it holds true that $s_n(q) = P(q)$. However, there do exist values of $p > p_0$ for which this condition is false. Those $p$ are precisely the values for which an $n$ exists where $s_n(p)$ is exactly equal to $0$, implying $v_{n,p} + 1 = v_{1,p} + n$. For such $p$ we have $n(q) > n(p)$ for all $q \in (p_0, p)$, so that $s_n(q)$ and $s_n(p)$ are not the same polynomial, and the limit $\displaystyle \lim_{q \rightarrow p} \dfrac{s_n(p) - s_n(q)}{p - q}$ does not have to exist. It might not come as a surprise that $\phi$ is the largest element in this countable set of exceptional values of $p \in (\frac{1}{2}, 1]$ where $s(p)$ is possibly not differentiable. \\ 

Moving on to smaller values of $p$, it follows from Lebesgue's theorem on the differentiability of monotone functions that $s(p) + p$ \big(and therefore $s(p)$\big) is differentiable almost everywhere, in particular on $(0, \frac{1}{2})$ as well. It is therefore tempting to conjecture more generally that the exceptional set of $p$ where $s(p)$ is not differentiable consists of all $p \in (0, 1]$ for which distinct $n$ and $m$ exist with $v_{n,p} + m = v_{m,p} + n$. We have not really thought about this much, however. \\

Possibly more importantly, for most $p < \frac{1}{2}$ it seems likely (based on computer calculations) that there are infinitely many $n$ for which $v_{n,p} > v_{n-1,p} + 1$ is true. It would therefore be very nice if this could be proven (or refuted). And if this were indeed true, then it does suggest that the optimal strategy for such $p$ is, in a sense, infinitely complex. To elaborate on this, one of the first non-trivial questions one can ask is: how many coins should you set aside if you get exactly two heads? But if there are infinitely many $n$ such that $v_{n,p} > v_{n-1,p} + 1$ and we know by Lemma \ref{notincrease} that there infinitely many $n$ with $v_{n,p} < v_{n-1,p} + 1$ as well, then even the answer to this seemingly simple question changes from 'one' to 'two' and back infinitely often.

\section{Acknowledgements}
The author is thankful for interesting and inspiring comments by Joachim Breitner, Peter Pfaffelhuber and Anneroos Everts. In particular for Joachim to ask the question in the first place, starting a formal verification of the results in this paper, and providing the first proof of Lemma \ref{vnmonotone}. In particular for Peter to notice the necessity of Lemma \ref{vnmonotone}, finding a non-recursive formula for $v_{n,p}$ in the case $p \ge \phi$, and considering the alternative objective of maximizing the probability of ending up with all heads. And in particular for Anneroos for being the best reviewer one can hope for.

\addcontentsline{toc}{section}{Bibliography}

\end{document}